\documentclass[12pt,oneside]{article}

\usepackage[small,nohug,heads=vee]{diagrams}
\diagramstyle[labelstyle=\scriptstyle]

\usepackage{amssymb}

\usepackage{amsmath}

\usepackage{amsfonts}

\usepackage{longtable}

\pagestyle{plain}

\newtheorem{theorem}{Theorem}[section]
\newtheorem{lemma}[theorem]{Lemma}

\newtheorem{definition}[theorem]{Definition}

\newenvironment{ack}
{\begin{trivlist}  \item \textsc{Acknowledgments}~} {\end{trivlist}}

\newenvironment{proof}
{\begin{trivlist}  \item \textsc{Proof:}~} {\hfill $\Box$
\end{trivlist}}

\newenvironment{proof of claim}
{\begin{trivlist}  \item \textsc{Proof of Claim:}~} {\hfill $\Box$
\end{trivlist}}

\newcommand{\closure}[1]{\ensuremath{\mathrm{cl}}(#1)}

\def \cl {\operatorname{cl}}
\def \st {\operatorname{st}}

\def \ind{\operatorname{ind}}

\newcommand{\im}[1]{\ensuremath{\mathrm{im}}(#1)}
\newcommand{\bd}[1]{\ensuremath{\mathrm{bd}}(#1)}

\newcommand{\ma}{\mathfrak{m}}


\def \cl {\mathrm{cl}}

\begin{document}

\author{Jana Ma\v{r}\'{i}kov\'{a}\\
Dept. of Math., Western Illinois
University\footnote{Part of this work was done while the author was a Fields Ontario postdoctoral fellow, during the thematic program on o-minimality and real analytic geometry at the Fields Institute, Toronto, spring 2009.}\\
J-Marikova@wiu.edu}
\title{O-minimal residue fields of o-minimal fields}

\maketitle

\begin{abstract}  Let $R$ be an o-minimal field with
a proper convex subring $V$.  We axiomatize the class of all structures $(R,V)$ such that $\boldsymbol{k}_{\ind}$, the corresponding residue field with structure induced from $R$ via the residue map, is o-minimal.  More precisely, in
\cite{thesispaper} it was shown that certain first order conditions on $(R,V)$ are sufficient for the o-minimality of $\boldsymbol{k}_{\ind}$.  Here we prove that these conditions are also necessary.
\end{abstract}

\begin{section}{Introduction}
Throughout this paper we let $R$ be an o-minimal field, that is, an o-minimal expansion of a real
closed field, and $V$ a proper convex subring (hence a valuation ring).  Let
$\st \colon V \to \boldsymbol{k}$ be the corresponding residue
(standard part) map with kernel $\ma$ and residue field
$\boldsymbol{k}= V / \ma$.  For $X\subseteq R^n$ we set $\st X := \st (X \cap V^n)$.  By
$\boldsymbol{k}_{\ind}$ we denote the residue field expanded by all sets
$\st X \subseteq \boldsymbol{k}^n$ with definable
$X \subseteq R^n$, for some $n$. ``Definable'' means
``definable (with parameters) in $R$'', unless indicated otherwise.

\smallskip\noindent
Here are some notational conventions we use: By
$\mathbb{N} = \{ 0,1,\dots \}$ we denote the set of natural numbers, and
the letters $i,j,k,l,m,n$ denote natural numbers.  For $a\in R^n$
and $i= 1,\dots ,n$, we let $a_i$ be the $i$-th coordinate
of $a$, unless stated otherwise.  We let $I:=[-1,1]\subseteq R$, and for a definable set $X \subseteq R^{1+n}$ and $r \in R$, we
set
$$X(r):= \{ x \in R^n \colon \; (r,x) \in X \}.$$
By $d(x,y)$ we denote the euclidean distance between $x$
and $y$. For definable $Y \subseteq R^n$ and $x\in R^n$,
$$d(x,Y):=\inf \{ d(x,y):\; y\in Y  \}.$$ If $X,Y \subseteq R^n$ are definable, then $X\cong Y$ means that there is a definable homeomorphism $X \to Y$.
For a definable $C^1$-map $\phi \colon X \to R^n$, with $X$ an open subset of $R^m$, and $a\in X$, we denote by $D \phi (a)$ the Jacobian matrix of $\phi$ at $a$.
 If $X \subseteq R^n$, then we denote by $\closure{X}$ the closure of $X$ in $R^n$, by $X^o$ the interior of $X$ in $R^n$, and we let $\partial X:= \closure{X}\setminus X$, and $\bd{X}:= \closure{X} \setminus X^o$.
For $0<k < n$, $p^{n}_{k}$ denotes the projection map $$R^n \to R^k \colon (x_1 , \dots , x_n ) \mapsto (x_1 , \dots ,x_k ).$$  For a definable map $\phi \colon X \to R^n$, $X \subseteq R^m$, and $i=1,\dots ,m$, we denote by $\phi_i$ the $i$-th coordinate function of $\phi$.  If $a,b \in R^n$, then $[a,b]$ denotes the set $\{ (t-1)a+tb:\; t\in [0,1] \}$, and $(a,b)$ denotes the set $\{ (1-t)a+tb:\; t\in (0,1) \}$.


\smallskip\noindent
We recall now some definitions and the main result from \cite{thesispaper}:

\begin{trivlist}
\item $(R,V) \models \Sigma (n)$ means that for every definable $X \subseteq I^{1+n}$ there is $\epsilon_0 \in \ma^{>0}$ such that $\st X(\epsilon_0 ) = \st X(\epsilon )$ for all $\epsilon \in \ma^{>\epsilon_0 }$.
\item $(R,V)\models \Sigma$ means that $(R,V) \models \Sigma (n)$ for every $n$.
\end{trivlist}

\begin{theorem}[\cite{thesispaper}]\label{sigmaimplkindomin} If $(R,V) \models \Sigma$, then
$\boldsymbol{k}_{\ind}$ is o-minimal, and the subsets of $\boldsymbol{k}^n$ definable in $\boldsymbol{k}_{\ind}$ are exactly the finite unions of sets $\st X \setminus \st Y$, where $X,Y \subseteq R^n$ are definable.
\end{theorem}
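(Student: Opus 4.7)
The plan is to introduce the candidate class $\mathcal{D}_n$ of finite unions of sets $\st X \setminus \st Y$ for definable $X, Y \subseteq R^n$, show that it is a boolean algebra closed under coordinate projection (both using $\Sigma$), and that it is o-minimal in dimension one. Since $\mathcal{D}$ contains the generators $\st X$ of $\bk_{\ind}$ by construction, these properties will yield both the structural characterization and the o-minimality of $\bk_{\ind}$.

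Boolean closure already uses $\Sigma$ at the level of intersections. Union and complement are routine (via $\st(X \cup Y) = \st X \cup \st Y$ and $\bk^n = \st R^n$), so boolean closure reduces to showing $\st X \cap \st X' = \st Z$ for some definable $Z$. My plan here: for $X, X' \subseteq I^n$ definable, consider the definable family $W_\epsilon = \{x \in X : d(x, X') \leq \epsilon\}$ parameterized by $\epsilon > 0$. For any $\epsilon \in \ma^{>0}$, $\st W_\epsilon \subseteq \st X \cap \st X'$ (witnesses in $W_\epsilon \cap V^n$ are infinitesimally close to elements of $X'$, which then lie in $V^n$ with matching standard part). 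Conversely, every $b \in \st X \cap \st X'$ lies in $\st W_\epsilon$ for all sufficiently large infinitesimal $\epsilon$. Axiom $\Sigma(n)$ applied to $\{W_\epsilon\}$ produces $\epsilon_0 \in \ma^{>0}$ with $\st W_\epsilon$ constant on $\epsilon \in \ma^{>\epsilon_0}$, forcing $\st X \cap \st X' = \st W_{\epsilon_0}$.

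The deeper step is closure of $\mathcal{D}_n$ under a coordinate projection $p\colon \bk^{n+1} \to \bk^n$, and this is the main obstacle. Distributing unions reduces to $p(\st X \setminus \st Y)$ with $X, Y \subseteq I^{1+n}$ definable. The easy identity $p(\st X) = \st p(X)$ handles single standard parts, but $p(\st X \setminus \st Y) \neq \st p(X) \setminus \st p(Y)$ in general. My plan is to apply a definable cell decomposition of $I^{1+n}$ compatible with $X$ and $Y$, so it suffices to analyze fibers cell-by-cell. For a fixed $b \in \bk^n$, the fiber of $\st X \setminus \st Y$ over $b$ is the union, over infinitesimal perturbations $\beta_0 + \delta$ of a chosen lift $\beta_0 \in V^n$, of standard parts of the $R$-fibers of $X$ and $Y$ at $\beta_0 + \delta$; iterated application of $\Sigma$ to the coordinates of $\delta \in \ma^n$ stabilizes these fibers on sufficiently small $\delta$. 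This stabilization lets one encode ``the fiber of $\st X \setminus \st Y$ over $b$ is nonempty'' by finitely many first-order conditions on a single bounded lift of $b$, whose standard-part image is a $\mathcal{D}_n$-set. This cell-by-cell analysis, with $\Sigma$ handling the passage to fiber-limits, is where I expect almost all of the technical work to live.

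O-minimality in dimension one is then clear: by o-minimality of $R$, every definable $X \subseteq R$ is a finite union of points and open intervals, so $\st X$ is a finite union of points and closed intervals in $\bk$, and differences and finite unions of such are finite unions of points and (possibly half-open) intervals. Combining boolean closure, projection closure, and one-dimensional o-minimality yields both conclusions of the theorem.
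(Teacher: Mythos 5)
The paper does not prove this theorem; it is cited from \cite{thesispaper}, so there is no ``paper's own proof'' here to compare against line by line. That said, your overall strategy---isolate the class $\mathcal{D}_n$ of finite unions of $\st X \setminus \st Y$, show it is a boolean algebra closed under projection, and read off o-minimality from the $n=1$ case---is the right skeleton and is consistent in spirit with the cited source. Your intersection argument via the family $W_\epsilon = \{x \in X : d(x,X') \leq \epsilon\}$ is clean and correct as far as it goes: for infinitesimal $\epsilon$ one has $\st W_\epsilon \subseteq \st X \cap \st X'$, and conversely any $b$ with lifts $x\in X$, $x'\in X'$ at infinitesimal distance $\mu$ satisfies $b \in \st W_\epsilon$ for $\epsilon > \mu$, so after applying $\Sigma$ the stabilized value $\st W_{\epsilon_0}$ is exactly $\st X \cap \st X'$. (You should say a word about why it suffices to take $X, X'$ bounded, since the definition of $\bk_{\ind}$ allows arbitrary definable $X$; this is fixable but not automatic.) The $n=1$ case is also fine.

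The genuine gap is the projection step, and it is not a small one. Two concrete problems. First, ``iterated application of $\Sigma$ to the coordinates of $\delta \in \ma^n$'' is not licensed by $\Sigma$: the axiom stabilizes $\st X(\epsilon)$ along a \emph{single} parameter $\epsilon$, and when you iterate over coordinates $\delta_1, \dots, \delta_n$, the threshold obtained for $\delta_i$ depends on the already-fixed values of $\delta_1, \dots, \delta_{i-1}$, which in turn are infinitesimals that depend on the point $b$; you have no uniform $\delta_0 \in \ma^n$ and no argument that one exists. A multi-parameter stabilization is a genuinely stronger statement and would itself need proof. Second, even if the fibers were somehow stabilized, the step ``encode `the fiber of $\st X\setminus\st Y$ over $b$ is nonempty' by finitely many first-order conditions on a single bounded lift of $b$, whose standard-part image is a $\mathcal{D}_n$-set'' is the entire content of the lemma and is not substantiated: the fiber over $b$ quantifies over \emph{all} lifts $\beta$ of $b$, not a single one, and you have not explained how the nonemptiness condition becomes a definable condition in $R$ on a lift rather than a condition on the infinitesimal neighborhood of all lifts. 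Producing the definable $Z, W \subseteq R^n$ with $p(\st X\setminus\st Y) = \bigcup_i(\st Z_i \setminus \st W_i)$ is where \cite{thesispaper} must (and does) do the real technical work, and your sketch currently asserts rather than supplies it.
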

The question, whether, on the other hand, o-minimality of
$\boldsymbol{k}_{\ind}$ implies $(R,V) \models \Sigma$, was left
unanswered.  Here, we give a positive answer to it,  yielding the remarkable fact, that the o-minimality of $\boldsymbol{k}_{\ind}$ is equivalent to $(R,V)$ satisfying a first-order axiom scheme.
More concretely, we obtain:

\begin{theorem}\label{maintheorem}
The following conditions on $(R,V)$ are equivalent:
\begin{enumerate}
\item $\boldsymbol{k}_{\ind}$ is o-minimal

\item $\Sigma (1)$

\item $\Sigma$.
\end{enumerate}
\end{theorem}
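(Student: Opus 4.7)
The implication $(3) \Rightarrow (1)$ is Theorem~\ref{sigmaimplkindomin}, and $(3) \Rightarrow (2)$ is immediate from the definition of $\Sigma$. It therefore suffices to prove $(1) \Rightarrow (2)$ and $(2) \Rightarrow (3)$.

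For $(2) \Rightarrow (3)$, the plan is to proceed by induction on $n \ge 1$, with base case $\Sigma(1)$. Given a definable $X \subseteq I^{1+n}$, I would use $o$-minimal cell decomposition in $R$ to reduce to the case of a single cell, and then slice in the last coordinate: for each $b \in I$ the slice $X_b = \{(\epsilon, a) : (\epsilon, a, b) \in X\} \subseteq I^{1+(n-1)}$ admits a stabilizing threshold $\epsilon_0(b) \in \ma^{>0}$ by the inductive hypothesis. The key task is to promote this pointwise family to a single $\epsilon_0 \in \ma^{>0}$ uniform in $b$; I expect to achieve this by applying $\Sigma(1)$ to a two-dimensional auxiliary set in $R$ encoding the dependence on $b$ of when $\st X_b(\epsilon)$ stabilizes, combined with definable choice in $R$ and the convexity of $V$.

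For $(1) \Rightarrow (2)$, I would argue by contrapositive: assume $\Sigma(1)$ fails, witnessed by a definable $X \subseteq I^2$, and construct a definable subset of $\boldsymbol{k}$ in $\boldsymbol{k}_{\ind}$ with infinitely many connected components. Cell decomposition in $R$ reduces $X$ to a single cell over its first coordinate, either a graph $\{(y, f(y))\}$ or an interval $\{(y, x) : f(y) < x < g(y)\}$, so the failure of $\Sigma(1)$ becomes non-stabilization of $\st f(\epsilon)$ (or $\st g(\epsilon)$) for some continuous definable $f: J \to I$ with $0$ in the closure of $J$. By $o$-minimality in $R$, $f$ is eventually monotone near $0^+$, so the non-stabilization is witnessed by a cofinal sequence $\{\epsilon_n\} \subseteq \ma^{>0}$ with $\st f(\epsilon_n)$ strictly monotone and pairwise distinct in $\boldsymbol{k}$.

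To encode this non-stabilization as a set definable in $\boldsymbol{k}_{\ind}$, I would choose $\delta \in \ma^{>0}$ so that the ratios $\epsilon_n/\delta$ have pairwise distinct standard parts $\bar{c}_n \in \boldsymbol{k}^{>0}$, and form the $R$-definable set $Z = \{(t, x) \in R^2 : (\delta t, x) \in X,\ |t|, |x| \le 1\}$. Its standard part $\st Z \subseteq \boldsymbol{k}^2$ is definable in $\boldsymbol{k}_{\ind}$, and its fibers over the $\bar{t}$-coordinate capture the distinct values $\st f(\epsilon_n)$; a suitable Boolean combination or projection then yields an infinite discrete subset of $\boldsymbol{k}$, contradicting o-minimality. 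The main obstacle is the choice of $\delta$: the witnesses to failure of $\Sigma(1)$ may live at incommensurable scales in $\ma^{>0}$, so no single rescaling need capture enough of them. Resolving this likely requires either a careful asymptotic analysis of $f$ using the $o$-minimal structure of $R$, or an iterative construction using several parameters in $\ma^{>0}$ to expose the jump structure of $\st f$.
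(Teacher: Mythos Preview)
Your proposal has genuine gaps in both nontrivial implications.

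\textbf{On $(2)\Rightarrow(3)$.} The inductive scheme you outline breaks at the ``promotion to a uniform $\epsilon_0$'' step. The condition ``$\epsilon_0$ stabilizes $\st X_b(\epsilon)$'' is a statement about $\ma$ and the standard part map, hence is \emph{not} definable in $R$. So there is no $R$-definable function $b\mapsto \epsilon_0(b)$ to which definable choice could apply, and no $R$-definable ``auxiliary set encoding the dependence on $b$ of when $\st X_b(\epsilon)$ stabilizes'' to which $\Sigma(1)$ could be applied. The phrase ``combined with definable choice in $R$ and the convexity of $V$'' hides exactly the difficulty the paper is devoted to overcoming. The paper does \emph{not} induct on $n$; it first invokes Lemma~\ref{red to dim2} (from \cite{jalou}) to reduce to definable $X\subseteq I^{1+n}$ with $\dim X(r)\le 1$, hence $\dim X\le 2$. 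It then uses a Kurdyka--Raby type decomposition (Lemma~10.8 of \cite{hlimit}) to write $X$, after orthogonal rotations, as graphs of $C^1$-maps with partial derivatives bounded by $1$ over $2$-dimensional cells in $V^2$. The first coordinate $x_1$ then becomes a continuous function $G$ on each such $2$-cell, and the heart of the paper is Lemma~\ref{main lemma}: producing $\epsilon_0$ good for $G$ from $\Sigma(1)$ alone. That lemma is a delicate geometric argument comparing a ``long, steep'' path $\alpha$ (in the direction of $-\nabla G$) with a ``short'' path $\beta$ near the locus of minimal $|\nabla G|$, together with invariance of domain to control the topology of the level curves $G^{-1}(t)$. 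The Lipschitz-type control from the decomposition (Lemmas~\ref{mean value for small paths} and \ref{extension lemma}) is then used to transfer $\epsilon_0$ good for $G$ back to $\epsilon_0$ good for $X$. None of this is captured by your inductive slicing.

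\textbf{On $(1)\Rightarrow(2)$.} The paper simply cites \cite{jalou} for this implication. Your proposed direct argument runs into precisely the obstacle you yourself identify: failure of $\Sigma(1)$ may be witnessed only on a cofinal family in $\ma^{>0}$ living at incommensurable scales, and a single rescaling by some $\delta\in\ma^{>0}$ need not pull any two distinct witnesses into $V\setminus\ma$ simultaneously. Your suggestion of ``an iterative construction using several parameters in $\ma^{>0}$'' is not a proof; without a concrete mechanism (and you offer none), this part remains a sketch with an acknowledged gap rather than an argument.
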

The implication $3\Rightarrow 1$ is Theorem \ref{sigmaimplkindomin}, and $1 \Rightarrow 2$ can be found in \cite{jalou}.  So it is left to show $\Sigma (1) \Rightarrow \Sigma$.
Note that by cell decomposition, and using
the definable homeomorphism
$$\tau \colon R^2 \rightarrow (-1,1)^{2}\colon (x_1 ,x_2 ) \mapsto
(\frac{x_1 }{\sqrt{1 + x_{1}^2}} , \frac{x_2
}{\sqrt{1+x_{2}^{2}}}),$$
$\Sigma (1)$ is equivalent to:
for every definable $f\colon R \to R$ there is $\epsilon_0 \in \ma^{>0}$ such that $\st f(\epsilon_0 ) = \st f(\epsilon )$ for all $\epsilon \in \ma^{>\epsilon_0}$.

 Towards proving $\Sigma (1) \Rightarrow \Sigma$ we use a reduction to definable families of one-dimensional subsets of $I^2$ from \cite{jalou} (where it is stated for the case when $\boldsymbol{k}_{\ind}$ is o-minimal, but the proof does not use this assumption):
\begin{lemma}[\cite{jalou}]\label{red to dim2}
Let $n\geq 1$, and suppose that for all definable $X\subseteq I^{1+n}$ with $\dim X(r) \leq 1$ for all $r\in I$ there is $\epsilon_0 \in \ma^{>0}$ such that $\st X (\epsilon_0 ) = \st X (\epsilon )$ for all $\epsilon \in \ma^{ > \epsilon_0}$. Then $(R,V) \models \Sigma (n)$.
\end{lemma}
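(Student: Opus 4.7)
My plan is to reduce an arbitrary definable $X \subseteq I^{1+n}$ to the one-dimensional-fiber hypothesis via slicing along the last coordinate. For each $\bar v \in I^{n-1}$ set
\[
X^{\bar v}(r) := \{t \in I : (r, \bar v, t) \in X\} \subseteq I.
\]
Since every tuple in $X(r) \subseteq I^n$ already lies in $V^n$, an unwinding of the definition of $\st$ produces the slicing identity
\[
\st X(r) \;=\; \bigcup_{\bar v \in I^{n-1}} \bigl(\{\st \bar v\} \times \st X^{\bar v}(r)\bigr).
\]
Embedding $X^{\bar v} \subseteq I^2$ into $I^{1+n}$ by padding the extra coordinates with $0$ gives a definable $\widetilde X^{\bar v}$ with $\dim \widetilde X^{\bar v}(r) \leq 1$, to which the hypothesis of the lemma applies; thus for each $\bar v$ there is some $\epsilon_0(\bar v) \in \ma^{>0}$ past which $\st X^{\bar v}(\cdot)$ is constant.

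The main obstacle is to promote this pointwise stabilization to uniform stabilization in $\bar v$: a single $\epsilon_0^{*} \in \ma^{>0}$ dominating $\epsilon_0(\bar v)$ for every $\bar v \in I^{n-1}$ would, via the slicing identity, immediately give $\st X(\epsilon) = \st X(\epsilon_0^{*})$ for all $\epsilon \in \ma^{>\epsilon_0^{*}}$. I would establish such uniformity by a saturation argument: pass to a sufficiently saturated elementary extension $(R^{*}, V^{*})$ of $(R,V)$ (the hypothesis lifts there since it is a first-order axiom scheme), and consider the partial type in a single variable $z$ consisting of the formula $z \in \ma^{>0}$ together with, for each $\bar v \in I^{n-1}$, the first-order sentence in $(R,V)$ expressing ``$\st X^{\bar v}(\epsilon) = \st X^{\bar v}(z)$ for every $\epsilon \in \ma^{>z}$''. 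This partial type is finitely satisfied, since for any finite collection $\{\bar v_1, \dots, \bar v_k\}$ the element $z := \max_i \epsilon_0(\bar v_i)$ lies in $\ma^{>0}$ and works, so saturation realizes it by some $\epsilon_0^{*}$.

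The statement ``there exists such a uniform $\epsilon_0^{*}$ for $X$'' is itself a single first-order sentence in the language of $(R,V)$ with $X$ as parameter, so its truth in the saturated extension transfers back to $(R,V)$ by elementary equivalence. Combining the uniform $\epsilon_0^{*}$ with the slicing identity yields $\st X(\epsilon) = \st X(\epsilon_0^{*})$ on $\ma^{>\epsilon_0^{*}}$, establishing $(R,V) \models \Sigma(n)$. The principal technical point, and the expected main difficulty, is the uniformity step: the map $\bar v \mapsto \epsilon_0(\bar v)$ is only $(R,V)$-definable and its image in $\ma^{>0}$ cannot be controlled via o-minimality of $R$ alone, so the passage from pointwise to uniform stabilization genuinely needs the saturation argument.
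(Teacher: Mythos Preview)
Your slicing identity
\[
\st X(r)\;=\;\bigcup_{\bar v\in I^{n-1}}\bigl(\{\st\bar v\}\times\st X^{\bar v}(r)\bigr)
\]
is correct, and it does reduce the problem to finding a single $\epsilon_0$ that works simultaneously for \emph{all} slices $X^{\bar v}$. But the saturation step you propose for this uniformization does not go through.

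Here is the gap. You pass to an $|R|^{+}$-saturated elementary extension $(R^{*},V^{*})$ and realize the type consisting of ``$z\in\ma^{>0}$'' together with one stability formula for each $\bar v\in I^{n-1}\subseteq R^{n-1}$. The realization $\epsilon_0^{*}\in(\ma^{*})^{>0}$ therefore satisfies the stability condition only for those $\bar v$ that lie in the \emph{ground model} $R$. Now the sentence you wish to transfer back,
\[
\exists z\in\ma^{>0}\ \forall\bar v\in I^{n-1}\ \forall\epsilon\in\ma^{>z}\ \bigl(\st X^{\bar v}(\epsilon)=\st X^{\bar v}(z)\bigr),
\]
when evaluated in $(R^{*},V^{*})$, has the universal quantifier $\forall\bar v$ ranging over $(I^{*})^{n-1}$, not over $I^{n-1}\subseteq R^{n-1}$. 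Your $\epsilon_0^{*}$ is not known to witness that sentence, so you cannot invoke elementarity to pull the conclusion back to $(R,V)$. Equivalently: to apply the slicing identity inside $(R^{*},V^{*})$ you need uniformity over \emph{all} $\bar v\in(I^{*})^{n-1}$, which is exactly what the compactness argument fails to provide. There is no cheap repair: one cannot take $(R,V)$ itself to be $|R|^{+}$-saturated, and even if one chose $\epsilon_0(\bar v)$ definably in $(R,V)$, nothing forces $\sup_{\bar v}\epsilon_0(\bar v)$ to remain in $\ma$.

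The paper does not reprove this lemma (it is imported from \cite{jalou}), but the argument there is geometric rather than model-theoretic: one reduces an $X$ with fibres of dimension $d$ to a definable $Y\subseteq I^{1+n}$ with fibres of dimension $\le d-1$ whose fibrewise standard parts determine those of $X$, and iterates down to $d=1$. The uniformity in $\bar v$ is thus obtained by replacing the uncountable family $\{X^{\bar v}\}_{\bar v}$ by a \emph{single} definable lower-dimensional object, rather than by compactness.
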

The above lemma will enable us to work mainly in the plane, but at the cost of having to handle level curves of definable functions.  The main result on level curves is Lemma \ref{main lemma}, which is proved in the first section.  Its proof uses
the (two-variable version of the) Invariance of Domain Theorem for o-minimal fields by Woerheide (see \cite{woerheide}):
\begin{theorem}[\cite{woerheide}]\label{invarofdom}
Every injective definable, continuous map $f\colon X \to R^n$, where $X\subseteq R^n$, is open.
\end{theorem}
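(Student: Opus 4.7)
The plan is to prove the statement by induction on $n$, following the skeleton of the classical proof of invariance of domain but with definable replacements. For the base case $n=1$, the o-minimal monotonicity theorem writes $X$ as a finite union of points and open intervals and makes $f$ piecewise strictly monotone; global injectivity forces strict monotonicity on each definably connected component of $X$, and a strictly monotone continuous map on an open subset of $R$ is manifestly open.

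For the inductive step, assume the result in dimensions less than $n$, and without loss of generality assume $X$ is open in $R^n$ (the general statement reduces to this by cell decomposition of $X$ together with the inductive hypothesis applied to lower-dimensional cells). Apply a $C^1$-cell decomposition compatible with $f$, so that $X$ is partitioned into finitely many $C^1$-cells on each of which $f$ is $C^1$. On a top-dimensional (hence open) cell $C$, the Jacobian $Df$ has full rank on a dense open subset of $C$: otherwise the set $\{x \in C \colon \det Df(x) = 0\}$ contains a nonempty open $U \subseteq C$, forcing $\dim f(U) < n$ by o-minimal rank and dimension theory, and contradicting injectivity of $f|_U$ together with $\dim U = n$. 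The inverse function theorem then makes $f$ a local $C^1$-diffeomorphism on a dense open $G \subseteq X$, hence open on $G$. Write $B := X \setminus G$; this is a definable set of dimension $<n$, and the remaining task is to show openness of $f$ at points $a \in B$.

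This last step is the main obstacle. Fix $a \in B$, set $b := f(a)$, pick a small closed ball $\bar B_r(a) \subseteq X$, and let $S$ be its boundary sphere. The restriction $f|_S$ is injective, continuous, and definable, and after identifying $S$ with an open subset of $R^{n-1}$ via definable spherical coordinates the inductive hypothesis shows $f|_S$ is a definable homeomorphism onto $f(S)$; in particular $f(S)$ is a closed definable set of dimension $n-1$ not containing $b$. Choose $\rho > 0$ so that $B_\rho(b) \cap f(S) = \emptyset$. The goal is then to prove the inclusion $B_\rho(b) \cap f(X) \subseteq f(B_r(a))$, which, combined with density of $G$ near $a$ and the openness of $f|_G$, places $b$ in the interior of $f(X)$. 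The delicate point is the separation/definable-connectedness argument behind this inclusion; in the o-minimal setting I would handle it via definable triangulation and a combinatorial analysis of the cells of $R^n$ adjacent to $f(S)$, which mirror the cells adjacent to $S$ because $f$ restricts to a local diffeomorphism on the dense open set $G$. Avoiding a circular appeal to invariance of domain itself at this point is what makes the step substantive.
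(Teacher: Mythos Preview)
The paper does not prove this theorem at all: Theorem~\ref{invarofdom} is stated with the attribution \cite{woerheide} and is used purely as a black box (in the proof of Claim~3 inside Lemma~\ref{main lemma}). There is therefore no proof in the paper for your attempt to be compared against. The result is taken from Woerheide's thesis, where it is obtained by first developing a singular homology theory for definable sets in o-minimal structures and then running the standard homological proof of invariance of domain; that machinery is not reproduced here.

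As for your sketch on its own terms: the base case and the local-diffeomorphism-on-a-dense-open step are fine and standard, and you are right to flag the final separation argument as the real content. But as written it is not a proof. The step ``$B_\rho(b)\cap f(X)\subseteq f(B_r(a))$, which \ldots\ places $b$ in the interior of $f(X)$'' is exactly the place where the classical argument invokes degree or local homology, and your proposed substitute (``definable triangulation and a combinatorial analysis of the cells of $R^n$ adjacent to $f(S)$'') is only a wish, not an argument. In particular, knowing that $f$ is a local diffeomorphism on a dense open $G$ tells you nothing a priori about how the cells on the two sides of $f(S)$ match up near the bad set $f(B)$, so the phrase ``mirror the cells adjacent to $S$'' hides precisely the issue you are trying to resolve. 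You yourself note the danger of circularity here; without either building an o-minimal homology/degree theory (as Woerheide does) or supplying a genuinely different combinatorial Jordan--Brouwer argument, this step remains a gap rather than a proof.
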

 We remark that it is well-known that $\boldsymbol{k}_{\ind}$ is always weakly o-minimal (this follows from a result by Baisalov and Poizat in \cite{bp}).  An example to the effect that $\boldsymbol{k}_{\ind}$ is not always o-minimal is given in \cite{thesispaper}.  Results of van den Dries and Lewenberg (see \cite{tconv}) show that $\boldsymbol{k}_{\ind}$ is o-minimal if $V$ is $T$-convex, where $T$ is the theory of $R$. Hrushovski, Peterzil and Pillay observe in \cite{nip} that $\boldsymbol{k}_{\ind}$ is o-minimal when $R$ is sufficiently saturated and $V$ is the convex hull of $\mathbb{Q}$ in $R$.

\smallskip\noindent
Here is a related open question:
Assume that $(R,V)\models \Sigma (1)$.  Are the sets $\st X$, with $X\subseteq R^n$ definable in $(R,V)$, definable in $\boldsymbol{k}_{\ind}$?  (A positive answer was given in \cite{thesispaper} for the special case when $R$ is $\omega$-saturated and $V$ is the convex hull of $\mathbb{Q}$ in R.)

\end{section}

\begin{section}{Level curves of definable functions}
\begin{definition}
Let $f\colon X \to R$ be definable with $X \subseteq
R^n$. We say that $\epsilon_0$ is $good$ $for$ $f$ if $\epsilon_0
\in \ma^{>0}$ and $\st f^{-1}(\epsilon_0 ) = \st f^{-1}(\epsilon )$
for all $\epsilon \in \ma^{> \epsilon_0 }$.
\end{definition}

\begin{lemma}\label{good for dim 1}
Suppose $(R,V) \models \Sigma (1)$, and let $f\colon X \to R$ be
definable, with $X\subseteq R^n$ of
dimension one. Then there is $\epsilon_0$ good for $f$.  \end{lemma}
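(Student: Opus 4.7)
The plan is to reduce the problem, via cell decomposition and the monotonicity theorem, to a finite list of scalar applications of the function form of $\Sigma(1)$ recorded in the introduction, and then to combine them coordinate by coordinate. First, by o-minimal cell decomposition I write $X = F \cup C_1 \cup \dots \cup C_r$, with $F$ finite and each $C_s$ a $1$-cell carrying a definable homeomorphism $\gamma_s \colon (a_s, b_s) \to C_s$ onto an open interval. Each $p \in F$ with $f(p) \in \ma^{>0}$ contributes the single infinitesimal value $f(p)$, which I place on a finite list $L$ of forbidden values that $\epsilon_0$ will be required to exceed.

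Fix a cell $C_s$ and set $g_s := f \circ \gamma_s \colon (a_s, b_s) \to R$. By the monotonicity theorem I partition $(a_s, b_s)$ into finitely many points and open subintervals on which $g_s$ is either constant or strictly monotone. Partition points and constant subintervals are handled like points of $F$: their $g_s$-values (resp.\ constant values), when they happen to lie in $\ma^{>0}$, are added to $L$. On each strictly monotone subinterval $J$, $g_s|_J$ has a definable continuous inverse $\eta_J$ defined on the open interval $g_s(J) \subseteq R$; its endpoints lie in $R \cup \{\pm\infty\}$, and any endpoint of $g_s(J)$ lying in $\ma^{>0}$ is also added to $L$. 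Above $\max L$, the status ``$\epsilon \in g_s(J)$'' is then determined by $J$ alone and does not depend on $\epsilon$.

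For each strictly monotone $J$ on which ``$\epsilon \in g_s(J)$'' holds for $\epsilon$ above $\max L$, and for each coordinate index $j \in \{ 1, \dots ,n\}$, I apply the function form of $\Sigma(1)$ to the definable function $(\gamma_s \circ \eta_J)_j \colon R \to R$ (extended arbitrarily outside $g_s(J)$), obtaining an $\epsilon_0^{s,J,j} \in \ma^{>0}$ above which $\st (\gamma_s \circ \eta_J)_j (\epsilon)$ is constant. I take $\epsilon_0 \in \ma^{>0}$ to be the maximum of $\max L$ together with all the $\epsilon_0^{s,J,j}$, a finite set. For $\epsilon \in \ma^{>\epsilon_0}$, points of $F$, partition points, and constant subintervals then contribute nothing to $f^{-1}(\epsilon)$, while each relevant monotone $J$ contributes the single point $\gamma_s(\eta_J(\epsilon))$.

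The main obstacle I anticipate is the passage from per-coordinate stability to set-theoretic stability of $\st f^{-1}(\epsilon) \subseteq \boldsymbol{k}^n$. This is resolved by the $\tau$-compactification built into the function form of $\Sigma(1)$: each coordinate of $\gamma_s(\eta_J(\epsilon))$ either remains in $V$ with a constant standard part throughout $\ma^{>\epsilon_0}$, or escapes $V$ consistently to $+\infty$ or to $-\infty$ throughout $\ma^{>\epsilon_0}$. Combining across the $n$ coordinates, $\gamma_s(\eta_J(\epsilon))$ either lies in $V^n$ with a constant standard part (contributing one fixed point to $\st f^{-1}(\epsilon)$) or fails to lie in $V^n$ (contributing nothing). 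Since the cell decomposition of $X$ and each monotonicity decomposition of $(a_s, b_s)$ are finite, only finitely many applications of $\Sigma(1)$ are needed, and combining them yields the $\epsilon_0$ asserted by the lemma.
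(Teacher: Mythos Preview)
Your argument is essentially the paper's: reduce to finitely many pieces on which $f$ is constant or injective, invert on the injective pieces, apply the function form of $\Sigma(1)$ coordinate by coordinate to the inverse, and take a maximum. The paper organises this via the definably connected components of $f^{-1}([\delta,q])$ for suitable $\delta\in\ma^{>0}$ and $q>\ma$, rather than via a full cell decomposition of $X$ followed by the monotonicity theorem, but the content is the same, including the coordinatewise use of $\Sigma(1)$ on the local inverses.

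There is, however, one genuine slip. You set $\epsilon_0=\max\bigl(L\cup\{\epsilon_0^{s,J,j}\}\bigr)$ and then analyse $f^{-1}(\epsilon)$ only for $\epsilon>\epsilon_0$; but ``good for $f$'' requires $\st f^{-1}(\epsilon_0)=\st f^{-1}(\epsilon)$, and nothing in your construction rules out $\epsilon_0=\max L$ being an \emph{endpoint} of some $g_s(J)$. In that case $\epsilon_0\notin g_s(J)$ while every $\epsilon\in\ma^{>\epsilon_0}$ lies in $g_s(J)$, so $f^{-1}(\epsilon_0)$ misses the contribution from $J$ that $f^{-1}(\epsilon)$ picks up. Concretely, take $n=1$, $X=(0,1)\cup(2,3)$, $f(x)=x$ on $(0,1)$ and $f(x)=x-2+c$ on $(2,3)$ with $c\in\ma^{>0}$: then $L=\{c\}$, and with a harmless choice of extension the $\epsilon_0^{s,J,j}$ may all be taken below $c$, giving $\epsilon_0=c$; but $\st f^{-1}(c)=\{0\}$ whereas $\st f^{-1}(\epsilon)=\{0,2\}$ for $\epsilon\in\ma^{>c}$. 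The repair is immediate: since $\ma^{>0}$ has no largest element, replace your $\epsilon_0$ by any strictly larger infinitesimal, e.g.\ $2\epsilon_0$ (equivalently, insist from the start that each $\epsilon_0^{s,J,j}>\max L$, which $\Sigma(1)$ certainly permits). With this one-line adjustment your proof goes through.
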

\begin{proof}
The
case when there is no $\delta \in \ma^{>0}$ and $q>\ma$ such that
$[\delta ,q] \subseteq \im{f}$ is trivial. So suppose
$\delta \in \ma^{>0}$ and $q> \ma$ are such that $[\delta ,q]\subseteq
\im{f}$.  Then $f^{-1}([\delta ,q])$ has finitely many definably
connected components $X_1 , \dots ,X_k$.  After possibly shrinking $[\delta ,q]$ subject to the conditions $\delta \in \ma^{>0}$ and $q>\ma$, we may assume
that for every $i$, either $f|_{X_i }$ is injective or constant.

If $f|_{X_i}$ is constant and $f(x) \in \ma^{>0}$, where $x\in X_i$, then
set $\epsilon_i := f(x)$.  If $f|_{X_i}$ is constant and $f(x) \not\in \ma^{>0}$, for $x\in X_i$, then set $\epsilon_i =0$. If $f|_{X_i}$ is injective, then $f|_{X_i}$ has
an inverse,
$f^{-1}$.  Apply $\Sigma (1)$ to each coordinate function $(f^{-1})_j$ of
$f^{-1}$ to obtain a positive infinitesimal $\epsilon_{ij}$ such that $\st f^{-1}(\epsilon )= \st
f^{-1}(\epsilon_{ij} )$ for every $\epsilon \in \ma^{> \epsilon_{ij} }$.
Then
put $\epsilon_i := \max_{j\in \{1,\dots ,n  \}} \epsilon_{ij}$.  Now
set
$$\epsilon_0 :=\max_{i=1,\dots ,k}\{ \epsilon_i  \}.$$
It is easy to see that $\epsilon_0$ is good for $f$.
\end{proof}
The following lemma is obvious.
\begin{lemma}\label{partition}
 Let $f\colon X \to R$, where $X \subseteq R^n$, be definable, and let $X_1 , \dots ,X_k$ be a partition of $X$ into definable sets.  If $\epsilon_1 , \dots , \epsilon_k$ are good for $f|_{X_1 }, \dots , f|_{X_k }$, then $\max \{ \epsilon_1 , \dots , \epsilon_k   \}$ is good for $f$.
\end{lemma}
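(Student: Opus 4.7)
The plan is to exploit two elementary facts: first, since $X_1,\dots,X_k$ partition $X$, the fibre decomposes as
$$f^{-1}(\epsilon) \; = \; \bigcup_{i=1}^{k} (f|_{X_i})^{-1}(\epsilon)$$
for every $\epsilon \in R$; second, the standard part operator commutes with finite unions, so
$$\st f^{-1}(\epsilon) \; = \; \bigcup_{i=1}^{k} \st (f|_{X_i})^{-1}(\epsilon).$$
These two observations reduce the lemma to comparing the pieces on the right-hand side one index at a time.

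Set $\epsilon_0 := \max\{\epsilon_1,\dots,\epsilon_k\}$. Since each $\epsilon_i \in \ma^{>0}$, so is $\epsilon_0$. Take any $\epsilon \in \ma^{>\epsilon_0}$. Then for each $i$ we have $\epsilon > \epsilon_0 \geq \epsilon_i$, so goodness of $\epsilon_i$ for $f|_{X_i}$ gives
$$\st (f|_{X_i})^{-1}(\epsilon) \; = \; \st (f|_{X_i})^{-1}(\epsilon_i).$$
On the other hand, for each $i$ either $\epsilon_0 = \epsilon_i$ (in which case equality of the two fibres is trivial) or $\epsilon_0 > \epsilon_i$, in which case goodness of $\epsilon_i$ again yields $\st (f|_{X_i})^{-1}(\epsilon_0) = \st (f|_{X_i})^{-1}(\epsilon_i)$. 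In either case,
$$\st (f|_{X_i})^{-1}(\epsilon_0) \; = \; \st (f|_{X_i})^{-1}(\epsilon_i) \; = \; \st (f|_{X_i})^{-1}(\epsilon).$$
Unioning over $i$ and using the decomposition above produces $\st f^{-1}(\epsilon_0) = \st f^{-1}(\epsilon)$, which is the required goodness condition.

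There is essentially no obstacle here — the lemma is nothing more than the observation that ``good for $f$'' is preserved under taking maxima across a finite partition. The only mild piece of bookkeeping is the handling of indices where $\epsilon_0 = \epsilon_i$, which is why one must note both the strict and non-strict cases separately; but in either case the relevant equality is immediate.
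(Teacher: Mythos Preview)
Your argument is correct and is exactly the obvious verification the paper has in mind; indeed, the paper gives no proof at all, simply stating that ``the following lemma is obvious.'' There is nothing to add or compare.
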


The proof of the next lemma is a little lengthy, but the idea is fairly straight-forward, so here's a rough outline:  In order to find $\epsilon_0$ good for $G\colon X\to R^{\geq 0}$ as in the statement of Lemma \ref{main lemma}, we first show that, essentially, the level curves of $G$ are all uniformly definably homeomorphic to $I$.  Then we find a definable curve $\gamma$ passing through the points of $X$ where $|\nabla G|$ is minimal (o-minimality enables us to reduce to the case when $G$ is sufficiently nice).  We use $\Sigma (1)$ to find $\epsilon_0$ good for $G|_{\im{\gamma}}$, and for $G |_{\bd{X}}$.  Then we proceed to show that this $\epsilon_0$ is also good for $G$.  We assume towards a contradiction that it is not, hence some $a\in G^{-1}(\epsilon)$, where $\epsilon \in \ma^{> \epsilon_0 }$, is at noninfinitesimal distance from $G^{-1}(\epsilon_0 )$.  We would be done if this assumption would enable us to find a definable curve in $X$, of length $>\ma$, such that at every point of the curve, the tangent vector to this curve points in the direction of $\nabla G$, and such that the image of $G$ restricted to the curve is $[\epsilon_0 , \epsilon ]$.  This would put us into the paradoxical situation that the steepest path down a hill is much longer than a path which is less steep.  To find a curve as described above is not necessarily possible, but we define a curve $\alpha$ (long, steep path down the hill) with the above properties, except that, at every point $x$ of $\alpha$, the tangent vector to $\alpha$ is only reasonably close to $\nabla G(x)$.

\begin{lemma}\label{main lemma}  Suppose $(R,V)\models \Sigma (1)$, and let
$X\subseteq V^2$ be a closed definable set of dimension two, and let $G\colon X \to R^{\geq 0}$ be definable and continuous. Then there is $\epsilon_0$ good for $G$.
\end{lemma}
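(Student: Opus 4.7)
My plan follows the blueprint the author has already sketched. First, by cell decomposition applied to $X$ and $G$, together with Lemma \ref{partition}, I reduce to the case where $G$ is $C^1$ on $X^o$ with nowhere-vanishing gradient, and where for some $q > \ma$ the family of level sets $\{G^{-1}(\epsilon) : \epsilon \in (0,q)\}$ restricted to $X^o$ is uniformly definably trivial, with each fiber a finite disjoint union of arcs definably homeomorphic to $I$. Pieces where $G$ is constant, or where the level curves have dimension $\leq 1$, are handled directly by Lemma \ref{good for dim 1} and then glued together via Lemma \ref{partition}.  This yields the uniform level-curve picture referenced in the outline.

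Next, on a piece as above, I select a definable curve $\gamma$ whose intersection with each level set $G^{-1}(\epsilon)$ (for $\epsilon$ in the relevant infinitesimal window) consists of the (lexicographically first, say) point at which $|\nabla G|$ restricted to that level curve attains its minimum.  Applying Lemma \ref{good for dim 1} to $G|_{\im{\gamma}}$ and to $G|_{\bd{X}}$, both of dimension at most $1$, and taking the maximum via Lemma \ref{partition}, I obtain a candidate $\epsilon_0 \in \ma^{>0}$.  It remains to show that this $\epsilon_0$ is actually good for $G$ itself.

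For the core argument, suppose toward contradiction that there exist $\epsilon \in \ma^{>\epsilon_0}$ and $a \in G^{-1}(\epsilon)$ with $d(a, G^{-1}(\epsilon_0)) \notin \ma$.  Starting from $a$, I build a definable, arc-length parameterized curve $\alpha : [0, L] \to X$ ending on $G^{-1}(\epsilon_0)$, whose unit tangent at every point lies within a fixed noninfinitesimal angle of $-\nabla G/|\nabla G|$.  Then $G \circ \alpha$ is strictly decreasing and
$$\epsilon - \epsilon_0 \;=\; -\int_0^L \frac{d}{dt}G(\alpha(t))\, dt \;\geq\; c\int_0^L |\nabla G(\alpha(t))|\, dt$$
for some noninfinitesimal $c>0$.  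The role of $\gamma$ and of the goodness of $\epsilon_0$ for $G|_{\im{\gamma}}$ and $G|_{\bd{X}}$ is precisely to force a noninfinitesimal lower bound on $|\nabla G(\alpha(t))|$: the intersections of $\gamma$ with $G^{-1}(\epsilon_0)$ and with $G^{-1}(\epsilon)$ have the same standard parts, so $\gamma$ is infinitesimally localized, $a$ lies at noninfinitesimal distance from $\gamma$, and $\alpha$ can be arranged to stay away from $\gamma$ and from $\bd X$, where $|\nabla G|$ could otherwise be infinitesimal.  Since $L \notin \ma$ by hypothesis, the integral inequality yields $\epsilon - \epsilon_0 \notin \ma$, a contradiction.

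The main obstacle will be the construction of $\alpha$: gradient flow cannot simply be integrated in an o-minimal setting, so $\alpha$ has to be assembled definably, presumably piecewise as a parameterized one-dimensional definable subset of $X$, in a way that simultaneously secures the tangent-angle bound, avoidance of $\gamma$, and non-escape through $\bd X$ before arrival at $G^{-1}(\epsilon_0)$.  Invariance of Domain (Theorem \ref{invarofdom}) is the natural tool to show that locally the definable descent map remains open, giving enough room to continue $\alpha$ transversally through the level curves.  The remainder is bookkeeping: verifying uniformity of the level-curve trivialization across the infinitesimal interval $[\epsilon_0, \epsilon]$, controlling what happens near $\bd X$ via the separate goodness of $\epsilon_0$ for $G|_{\bd{X}}$, and patching pieces together through Lemma \ref{partition}.
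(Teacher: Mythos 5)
Your global setup matches the paper's: reduce by cell decomposition and the Trivialization Theorem to a $C^1$ situation with well-understood level curves, extract a definable curve $\gamma$ through the level-wise minima of $|\nabla G|$, apply $\Sigma(1)$ to $G|_{\im{\gamma}}$ and $G|_{\bd{X}}$ to produce the candidate $\epsilon_0$, and argue by contradiction from a point $a\in G^{-1}(\epsilon)$ at noninfinitesimal distance from $G^{-1}(\epsilon_0)$. But the core of your contradiction step does not hold up.

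You claim that, because $\epsilon_0$ is good for $G|_{\im{\gamma}}$ and $G|_{\bd{X}}$, the descent path $\alpha$ can be arranged to avoid $\gamma$ and $\bd{X}$ and thereby enjoy a noninfinitesimal lower bound on $|\nabla G(\alpha(t))|$. This is false, and in fact the opposite is forced on you: $G\circ\alpha$ changes by at most $\epsilon-\epsilon_0\in\ma$ over an arc of length $L>\ma$, so by the mean value theorem $|\nabla G|$ must be \emph{infinitesimal} somewhere along $\alpha$ no matter how you route it. Being at noninfinitesimal distance from $\gamma$ does not make $|\nabla G|$ noninfinitesimal; $\gamma$ is only the locus where $|\nabla G|$ is \emph{minimal} on each level curve, and the gradient can be uniformly infinitesimal across an entire level curve. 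Your integral inequality therefore yields nothing, since the factor $c\,|\nabla G(\alpha(t))|$ can be (and generically is) infinitesimal. The paper's actual mechanism is a \emph{comparison} of two paths: the long line segment $\alpha$ (straight, pointed within a fixed $45^\circ$ sector of $-\nabla G/|\nabla G|$ thanks to an explicit compass-direction reduction you omit) and a second path $\beta$ chosen in an $\epsilon_0$-neighborhood of $\gamma$, hence of infinitesimal length because $\epsilon_0$ is good for $G|_{\im{\gamma}}$. The key inequality $|\nabla G(\beta(t))\cdot\tau|\le 2\sqrt{2}\,|\nabla G(x)\cdot w|$ (combining $|\nabla G(\beta(t))|\le x_t+m_0\le 2|\nabla G(x)|$ with the angle bound) says $G$ changes at most a bounded multiple faster along $\beta$ than along $\alpha$; passing to inverse functions $h_\alpha^{-1}, h_\beta^{-1}$ on a common interval $(\epsilon_1,\epsilon_2)$ and integrating, a path of noninfinitesimal length ($\alpha$) is dominated by a bounded multiple of one of infinitesimal length ($\beta$), which is the contradiction. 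Without some such two-path comparison, no lower bound on $|\nabla G|$ is available, and your integral inequality cannot close the argument.

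A secondary point: you invoke Theorem \ref{invarofdom} for constructing $\alpha$ ``transversally through the level curves,'' but in the paper $\alpha$ is simply the straight segment from $a$ in the fixed direction $w=-\nabla G(a)/|\nabla G(a)|$ until it exits $G^{-1}([\epsilon_0,\epsilon])$; monotonicity of $G\circ\alpha$ then follows from the $45^\circ$ sector reduction and the Mean Value Theorem, with no topological input. Invariance of Domain is used earlier, to show (Claim 3) that the generic level curve is definably homeomorphic to $I$ or $S^1$ — if three or more arcs met at a point, one would produce a definable continuous injection of a disk into $X$ with non-open image, contradicting the theorem. Your sketch leaves out both this topological classification step (needed to make ``uniformly an arc'' precise) and the precise reduction that keeps the normalized gradient inside a single $S_i$; the latter is what makes a plain line segment suffice for $\alpha$ and yields the constant $\sqrt{2}$ in the comparison.
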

\begin{proof}
The points $(0,1)$, $(\frac{1}{\sqrt{2}},\frac{1}{\sqrt{2}})$, $(1,0)$,
$(\frac{1}{\sqrt{2}},-\frac{1}{\sqrt{2}})$, $(0,-1)$,
$(-\frac{1}{\sqrt{2}},-\frac{1}{\sqrt{2}})$, $(-1, 0)$, and
$(-\frac{1}{\sqrt{2}},\frac{1}{\sqrt{2}})$
determine a partition of $S^1 \subseteq R^2$ into
definable $S_1 , \dots ,S_{16}$, such that $S_i \cong (0,1)$ for
$i=1,\dots ,8$, and
$S_i$ is a singleton for $i=9,\dots ,16$.

By cell decomposition and Lemmas \ref{good for dim 1} and \ref{partition}, we may assume that
$X$ is the closure of a cell of dimension two such that $G$ is $C^1$ on $X^o$, and
either $|\nabla G|=0$ on $X^o$, or $|\nabla G|>0$ on $X^o$ and
there is $i\in \{1, \dots ,8\}$ satisfying
$-\frac{\nabla G(x)}{|\nabla G(x)|} \in S_i$ for all $x \in X^o$.  If $|\nabla G|=0$ on $X^o$, then $G$ is constant on $X$, so we only need to consider the second option.

The case when $G(X)$ does not contain an interval $[\delta ,q]$,
with $\delta \in \ma^{>0}$ and $q>\ma$ is trivial.  So let $\delta
\in \ma^{>0}$ and $q>\ma$ be such that $[\delta ,q]\subseteq G(X)$.
 Using the Trivialization Theorem (1.7, p.147 in \cite{book}), and after possibly shrinking $[\delta ,q]$, subject to $\delta \in
\ma^{>0}$ and $q>\ma$, and replacing $X$ by $G^{-1}[\delta ,q]$, we obtain a definable homeomorphism $$h \colon X\to [\delta ,q] \times G^{-1}(\delta )$$ such that the diagram

\begin{diagram}
X &      & \rTo(2,0)^{h} & & [\delta , q]\times G^{-1}(\delta )\\
  &\rdTo & & \ldTo &\\
  &&  [\delta ,q] &&
\end{diagram}
commutes, where $X\to [\delta ,q]$ is given by $G$ and $[\delta , q]\times G^{-1}(\delta ) \to [\delta ,q]$ is the projection map on the first factor.  So all $G^{-1}(t)$, for $t\in [\delta ,q]$, are uniformly definably homeomorphic, and we may assume that $\dim G^{-1}(\delta )=1$.

By Lemma \ref{partition},
we can reduce to the case when $X$ is definably connected.

\smallskip\noindent
{\bf Claim 1. \/}$G^{-1}(\delta )$ is definably connected.

 The set $X$ is closed and definably homeomorphic
to $G^{-1}(\delta ) \times I$. Assume towards a contradiction that
$G^{-1}(\delta )=Y_1 \dot\cup Y_2$, where $Y_1 , Y_2$ are definable,
nonempty and closed in $G^{-1}(\delta )$. Then $Y_1 ,Y_2$ are also closed in
$X$, and
$$X \cong (Y_1 \dot\cup Y_2 ) \times I \cong (Y_1 \times I)
\dot\cup (Y_2 \times I),$$ a
contradiction with $X$ being definably connected.

\smallskip\noindent
For $x=(x_1 , x_2 )\in V^2$ and $r>0$ we let $$B_{r}(x):=[x_1 -r, x_1 + r]\times [x_2 - r, x_2 + r].$$

\smallskip\noindent
{\bf Claim 2. \/}Let $Y \subseteq V^2$ be definable, definably connected, and closed.
 If for every $y\in Y$, there is $r>0$ such that $B_r (y) \cap Y \cong I$, then either $Y \cong S^1$ or $Y \cong I$.

 Let $\mathcal{S}$ be a stratification of $Y$.  Then $Y$ is a finite union of 0-dimensional cells $p_1 , \dots , p_k \in \mathcal{S}$, and 1-dimensional cells $q_1 , \dots , q_l \in \mathcal{S}$, where $k\geq 2$ and $l\geq 1$.  Every $q_i$ is definably homeomorphic to an open interval, and with every $q_i$, $Y$ contains its endpoints among $p_1 , \dots ,p_k$.  On the other hand, every $p_j$ is the endpoint of one or at most two of the $q_i$'s.  Now use the fact that $Y$ is definably connected.

\smallskip\noindent
{\bf Claim 3. \/}$G^{-1}(\delta ) \cong S^1$
or $G^{-1}(\delta ) \cong I$.

It suffices to prove $G^{-1}(t) \cong S^1$ or $G^{-1}(t) \cong I$ for $t\in (\delta ,q)$.  So let $t\in (\delta ,q)$, and assume towards a contradiction that
$G^{-1}(t) \not\cong S^1$ and $G^{-1}(t)\not\cong I$.
By Claims 1 and 2, we can find $a\in G^{-1}(t)$ and $r_0 >0$ with $G^{-1}(t) \cap B_{r}(a)
\not\cong I$ for
all $r \in (0,r_0 )$.
Hence, for all $r \in (0,r_0 )$, $G^{-1}(t) \cap B_{r}(a)$
is definably homeomorphic to the union of $k\geq 3$ closed line
segments $l_1 , \dots , l_k$ such that each $l_i$ has $a$ as an endpoint, and $\bigcap_{i=1}^{k} l_i = \{ a \}$ (we
identify $a$ here with its homeomorphic image). 
So let $r\in (0,r_0 )$, and let $$\phi \colon (\bigcup_{i=1}^{k}l_i )\times I \to X\cap B_r (a)$$ be a definable homeomorphism.
We can  map $(-1,1)^2$ into $\phi ((l_1 \cup l_2 ) \times I)$ via a definable, continuous, injective map $\theta \colon (-1,1)^2 \to X$ such that $\theta (0,0) = a$. Then $\im {\theta}$ is not open in $X$.  Now $X^o \cong (-1,1)^2$ via a definable homeomorphism $\psi$, so $\psi \circ \theta \colon (-1,1)^2 \to R^2$ yields a definable, injective, continuous map which does not map $(-1,1)^2$ onto an open subset of $R^2$, a contradiction with Theorem \ref{invarofdom}.

\smallskip\noindent
We assume from now on that
$G^{-1}(t)\cong I$ for all $t\in [\delta ,q]$.  The case when $G^{-1}(t)\cong S^1$ is similar, but simpler, and left to the reader.

\smallskip\noindent
{\em Construction of $\epsilon_0$ good for $G$:\/}
Let $\{\phi_t :\; t\in [\delta ,q]\}$ be a definable family of
homeomorphisms $\phi_t \colon [0,1] \to  G^{-1}(t)$.  For $t\in (\delta ,q)$ we set
$$|\nabla G (\phi_{t}(0))| :=
\lim_{s \to 0} |\nabla G (\phi_{t} (s))|, \mbox{ and } |\nabla G
(\phi_{t}(1))| := \lim_{s \to 1} |\nabla G (\phi_{t} (s))|,$$
where
$$\nabla G (\phi_t (s) ) =
(\frac{\partial G}{\partial x_1 } (\phi_{t}(s)), \frac{\partial
G}{\partial x_2 } (\phi_t (s))),$$ for $s\in (0,1)$. The above limits exist in $R \cup \{ - \infty , \infty  \}$ by o-minimality.
Now set
\begin{eqnarray*}
x_t &:=& \min \{ |\nabla G(x)|: \; x\in G^{-1}(t)  \},\\
H_t &:=& \{ x\in G^{-1}(t) : \; |\nabla G(x)| = x_t \},
\end{eqnarray*}
where $t\in (\delta ,q)$,
and let $H := \bigcup_{t\in (\delta ,q)} H_t$.

By
definable choice, and after replacing $\delta$ by a bigger $\delta \in \ma^{>0}$, and replacing $q$ by a possibly smaller $q>\ma$, we can find a definable
map $\gamma \colon [\delta ,q] \to H$ with $G(\gamma (t)) =t$ for every $t\in [\delta ,q]$.
Next, find $\epsilon' \in \ma^{>0}$
good for $G|_{\im{\gamma}}$, and $\epsilon'' \in \ma^{>0}$ good for
$G|_{\text{bd} X}$, and set
$$\epsilon_0 := \max \{\epsilon' ,\epsilon'' \}.$$
The rest of the proof consists of showing that $\epsilon_0$ is indeed good for $G$.

First, assume towards a contradiction that $\epsilon \in
\ma^{> \epsilon_0 }$ and $a\in G^{-1} (\epsilon )$ are such that $d(a,
G^{-1}(\epsilon_0 )) > \ma$. By continuity of $G$, we may as well assume
that $a \not\in \text{bd} X$.
We now proceed to define what we called in the remark just before Lemma \ref{main lemma} ``a long, steep path down the hill".

\smallskip\noindent {\em Construction of $\alpha$:\/}  
It is clear that $\nabla G$ is defined and
continuous on $X^o= (G^{-1}([\delta ,q]))^o$.
Let $i$ be such that $-\frac{\nabla G(x)}{|\nabla G(x)|} \in S_i$ on $X^o$, and
set $w := -\frac{\nabla G (a )}{|\nabla G(a
)|}$.
Since $a \not\in \text{bd}X$, for all sufficiently small $t>0$, $a + tw \in (G^{-1}([\epsilon_0 , \epsilon ]))^o$.
Let $t_0$ be the first $t>0$ such that $a + t_0 w \in \text{bd} G^{-1}([\epsilon_0 , \epsilon ])$.  We
set $b := a + t_0 w$, and define $\alpha \colon [0,1] \to X$ by $$\alpha (t) = (1-t)a + tb .$$
From now on let $\alpha$ be as in ``Construction of $\alpha$".

\smallskip\noindent
{\bf Claim 4. \/}
$G(\alpha )$ is strictly decreasing.

Assume towards a contradiction that we
can find $t_1 \in (0 , 1)$ so that $(G\circ \alpha )'(t_1
) \geq 0$.  By the definition of $\alpha$,
 $G( \alpha (0)) > G( \alpha (t))$ for all
sufficiently small $t>0$.  Then, by the Mean Value Theorem, $(G \circ \alpha )'(t_2 ) =0$ for some $t_2
\in (0, t_1 ]$. But
$$(G\circ \alpha )'(t_2 ) = \nabla G(\alpha (t_2 )) \cdot
D \alpha (t_2 ),$$
hence $\nabla G (\alpha (t_2 ))$ and
$D \alpha (t_2 )$ are orthogonal, a contradiction with the
definition of $\alpha$.


\smallskip\noindent
{\bf Claim 5.\/}
$\text{d}(a , b) > \ma$.  

To see this, first observe that either $b\in G^{-1}(\epsilon_0 )$, or $b\in \text{bd} X$.  If $b\in G^{-1}(\epsilon_0 )$, then the claim is just the assumption that $\text{d}(a, G^{-1}(\epsilon_0 ))>\ma$.  If $b\in \text{bd} X$, then use $\text{d}(a, b ) + \text{d}(b , G^{-1} (\epsilon_0 )) > \ma$ and $\text{d}(b , G^{-1} (\epsilon_0 )) \in \ma^{>0}$ (since $\epsilon_0$ is good for $G|_{\text{bd} X}$).

\smallskip\noindent {\em Construction of $\beta$:\/}
Note that $|\nabla G(x)|>0$ for all $x\in \im{\alpha|_{[0,1)}}$, since $\im{\alpha|_{[0,1)}} \subseteq X^o$.  If $|\nabla G(b)|=0$, then replace $b$ by an element in $\im \alpha$ which is infinitely close to the original $b$.  Now define $$m_0 := \min\{   |\nabla G(x)|:\; x \in \im{\alpha} \}>0.$$  For $t\in [\epsilon_0 , \epsilon ]$ let
$$
K_t := \{ x\in G^{-1}(t) \cap X^o :\; |\nabla G(x)| - x_t <m_0 \mbox{ and }\text{d}(x, \gamma (t)) <\epsilon_0  \},$$ note that every $K_t$ is nonempty, and put $K := \bigcup_{t\in [\epsilon_0 , \epsilon ]} K_{t}$.
By curve selection, we can find a definable map $\beta \colon [\epsilon_0 , \epsilon ] \to K$ with $G(\beta (t))=t$.

Let $\epsilon_1 , \epsilon_2 \in [\epsilon_0 ,\epsilon ]$ be such that $$G(b)\leq \epsilon_1 < \epsilon_2 \leq \epsilon ,$$ $\text{d}(c,d) > \ma$, for $c, d \in \im{\alpha}$ with $G(c)=\epsilon_1$, $G(d)=\epsilon_2$, and $\beta |_{(\epsilon_1 , \epsilon_2 )}$ is $C^1$.
By Lemma 10.8, p.204 in \cite{hlimit}\footnote{Lemma 10.8 in \cite{hlimit} is an adaptation of Kurdyka and Raby's subanalytic Proposition 1.4 in \cite{kurdyka}.
In \cite{hlimit} it is stated for an elementary extension of an o-minimal expansion of the real field, but the proof goes through for any
o-minimal field.}
we may further assume that $s$ is an orthogonal linear transformation of $R^2$ such that $s(\im{\beta |_{(\epsilon_1 ,\epsilon_2 )}})=\Gamma \phi$, where $\phi \colon p^{2}_{1}s(\im{\beta |_{(\epsilon_1 ,\epsilon_2 )}}) \to R$ is $C^1$, and $|\frac{\partial \phi }{\partial x_1}|<1$ on $p^{2}_{1}s(\Gamma \phi )$.

Let $x\in (c,d)$ and $t=G(x)$.  Then, by the definition of $\alpha$,
\begin{equation}\label{eq}
\frac{|\nabla G(x)|}{\sqrt{2}}\leq |\nabla G(x) \cdot w|\leq |\nabla G(x)|.
\end{equation}
By the definition of $\beta$,
$$|\nabla G(\beta (t))|-m_0 \leq x_t \leq |\nabla G(x)|,$$ so
$$|\nabla G (\beta (t))| \leq |\nabla G(x)| + m_0 , $$ and thus
$$|\nabla G(\beta (t))| \leq 2|\nabla G(x)|, $$ and, using inequality (\ref{eq}), we obtain
\begin{equation}\label{ineq}
|\nabla G(\beta (t)) \cdot \tau | \leq 2 \sqrt{2} |\nabla G(x) \cdot w|,
\end{equation}
 where $\tau $ is the unit tangent vector to $\im{\beta }$ at $\beta (t)$.

\smallskip\noindent
We now use $\alpha$ and $\beta$ to define two one-variable functions, $h_{\alpha}$ and $h_{\beta}$ respectively, which behave in a non-permissible way.

Let $h_{\beta }\colon p^{2}_{1} \Gamma \phi \to (\epsilon_{1}, \epsilon_{2})$ be the function $$h_{\beta }(x) = G (s^{-1} (x, \phi (x))).$$
By the definition of $\beta$, $\epsilon_0$ is good for $G|_{\im{\beta }}$, hence any two points in $\im{\beta }$ are infinitely close to each other.  It follows that $|p^{2}_{1} \Gamma \phi | \in \ma^{>0}$. 
For $x \in p^{2}_{1} \Gamma \phi$ we have
$$|h_{\beta }'(x)|= |\nabla G(s^{-1}(x , \phi (x))) \cdot Ds^{-1} (x , \phi (x )) \cdot D(id_R , \phi)(x)   |.$$  Since $Ds^{-1} (x,\phi (x)) \cdot D(id_R , \phi ) (x)$ is just $c \tau$, where $c\in [1,\sqrt{2})$, and $\tau$ is the unit tangent vector to $\im{\beta }$ at $s^{-1}(x, \phi (x))$, it follows that for every $x \in p^{2}_{1} \Gamma \phi$ there is $c\in [1,\sqrt{2})$ such that
\begin{equation}\label{eq1}
|h_{\beta }'(x)|=c |\nabla G (s^{-1} (x,\phi (x))) \cdot \tau |.
 \end{equation}

Next, note that either the projection $p_1 \colon R^2 \to R \colon (x_1 , x_2 ) \mapsto x_1$ or the projection $p_2 \colon R^2 \to R \colon (x_1 , x_2 ) \mapsto x_2 $ maps
$(c,d)$ onto an open interval of finite, noninfinitesimal length.  Say $|p_1 (c,d)| > \ma$ (the other case is similar), and assume for simplicity $c_1 < d_1$. Define $h_{\alpha}\colon p_1 (c, d)\to (\epsilon_1 , \epsilon_2 )$ by $$h_{\alpha }(x) = G((p_1 |_{(c,d)})^{-1}(x)).$$  Then for all $x\in p_1 (c,d)$,  $$|h_{\alpha }'(x)| = |\nabla G ((p_1 |_{(c,d)})^{-1} (x))\cdot \begin{pmatrix} 1\\ \frac{d_2 - c_2 }{d_1 - c_1 }  \end{pmatrix} |,$$
hence
\begin{equation}\label{eq2}
|h_{\alpha }'(x)| = \lambda |\nabla G ((p_1 |_{(c,d)})^{-1} (x))\cdot w |,
 \end{equation}
 for some $\lambda \in V^{>\ma}$.

Let $x \in (c,d)$ and $y \in \im{\beta }$ be such that $G(x)=G(y)$.  Then inequality (\ref{ineq}) and equations (\ref{eq1}) and (\ref{eq2}) yield that for some $\lambda \in V^{>0}$, $|h_{\beta }'(y)| \leq \lambda |h_{\alpha}'(x)|$.

Then, by Claim 4, the inverses of $h_{\alpha }$ and $h_{\beta }$
 yield two definable, continuous, strictly monotone functions $(\epsilon_{1},\epsilon_2 ) \to R$, with the image of $h_{\alpha}^{-1}$ an interval of length $>\ma$, the image of $h_{\beta}^{-1}$ an interval of infinitesimal length, and such that for every $t\in (\epsilon_{1}, \epsilon_2 )$ there is $\lambda \in V^{>0}$ with
$\lambda |(h_{\beta}^{-1}(t))'| \geq | (h_{\alpha}^{-1}(t))'|$, a contradiction.

We have shown $\st G^{-1}(\epsilon )\subseteq \st G^{-1} (\epsilon_0 )$.  It is left to see that $\st G^{-1}(\epsilon_0 ) \subseteq \st G^{-1}(\epsilon )$.  Assume towards a contradiction that we can find $x \in G^{-1}(\epsilon_0 )$ such that $\text{d}(x, G^{-1}(\epsilon )) >\ma$.  Then we construct a definable map $[0,1] \to X$ whose image at $0$ is $x$, the image at $1$ lies in $\text{bd} G^{-1}([\epsilon_0 , \epsilon ])$, and the restriction of $G$ to the image of this curve is strictly increasing, just as we constructed $\alpha$, except that we set $w:=\frac{\nabla G(x)}{|\nabla G(x)|}$, and we obtain a contradiction similarly as in the proof of the other inclusion.

\end{proof}
\end{section}

\begin{section}{$\Sigma (1) \Rightarrow \Sigma$}
\begin{definition}
A {\em small path\/} is a definable, continuous map $\gamma \colon [0,r ] \to X$, with definable $X\subseteq R^n$ and $r \in R^{>0}$, such that $\st \gamma (0) = \st \gamma (t)$ for all $t\in [0,r ]$.
\end{definition}

\begin{lemma}\label{red}
Let $C=(f,g) \subseteq V^2$ be a cell of dimension two, and let $a \in \partial C$.  Then there is $a' \in C$ and a small path $\gamma \colon [0,r] \to \closure{C}$ such that $\gamma (0) =a$, $\gamma (r) = a'$, and $\gamma (0,1] \subseteq C$.
\end{lemma}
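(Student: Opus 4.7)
\noindent\emph{Proof proposal.} The plan is to combine the o-minimal curve selection lemma with a truncation so that the resulting curve is trapped in an infinitesimal neighbourhood of $a$. The smallness of the path ultimately rests only on the fact that $\ma^{>0}$ is nonempty, which is precisely the standing hypothesis that $V$ is a proper convex subring of $R$.

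First, since $C$ is definable and $a\in\closure{C}\setminus C$, I would invoke the standard o-minimal curve selection lemma to obtain a definable continuous $\tilde\gamma\colon[0,r_0]\to R^2$ with $\tilde\gamma(0)=a$ and $\tilde\gamma((0,r_0])\subseteq C$. In particular $\im{\tilde\gamma}\subseteq\closure{C}$, so this $\tilde\gamma$ already meets every demand of the conclusion except possibly smallness.

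To enforce smallness, I would truncate the domain. Define $\phi\colon[0,r_0]\to R^{\geq 0}$ by $\phi(t):=d(\tilde\gamma(t),a)$, a definable continuous function with $\phi(0)=0$. By o-minimal monotonicity, after shrinking $r_0$ I may assume that $\phi$ is either identically zero or strictly increasing on $[0,r_0]$. In the first case every $r\in(0,r_0]$ yields a small path. In the second case $\phi([0,r_0])=[0,M]$ for some $M\in R^{>0}$; choosing any $s\in\ma^{>0}$ and letting $r\in(0,r_0]$ be the unique point with $\phi(r)=\min(s,M)$ (by the intermediate value theorem), monotonicity gives $\phi([0,r])\subseteq[0,s]\subseteq\ma$. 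Hence $\tilde\gamma(t)-a\in\ma^2$ for every $t\in[0,r]$, so $\st\tilde\gamma(t)=\st a$, and setting $\gamma:=\tilde\gamma|_{[0,r]}$ together with $a':=\gamma(r)\in C$ satisfies all the stated conclusions.

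I do not expect a real obstacle here: the version of curve selection that forces the tail of $\tilde\gamma$ to land inside $C$ rather than merely in $\closure{C}$ is a classical application of cell decomposition (one descends to a cell of $C$ whose closure contains $a$ and uses the definable homeomorphism of that cell with an open box), and the monotonicity-and-IVT truncation is routine. The only conceptual point worth underlining is that $s$ must be genuinely infinitesimal, which is exactly what the proper-subring hypothesis on $V$ supplies and what gives the statement its content.
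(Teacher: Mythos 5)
Your proposal is correct and follows essentially the same route as the paper: curve selection to produce a definable continuous arc from $a$ into $C$, then truncation to an infinitesimal neighbourhood of $a$, using that $\ma^{>0}\neq\emptyset$. The paper streamlines the truncation by taking the curve-selection curve parametrized by distance to $a$ (so $d(a,\gamma(t))=t$ and one just shrinks $r$ into $\ma^{>0}$), whereas you reach the same point via monotonicity of $t\mapsto d(\tilde\gamma(t),a)$ and the intermediate value theorem; the difference is cosmetic.
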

\begin{proof}
By curve selection, we can find a definable and continuous map $\gamma \colon [0,r] \to X$, for some $r >0$, such that $\gamma (0) =a$, and $\text{d}(a, \gamma (t)) = t$ for every $t\in [0,r]$.  We may assume that $r \in \ma^{>0}$, and we set $a' = \gamma (r )$.  Then $a'$ and $\gamma $ have the required properties.
\end{proof}

\begin{lemma}\label{ex of small paths}
Let $C=(f,g) \subseteq V^2$ be a cell of dimension two, and assume that
$f$, $g$ are $C^1$ and
$f'$, $g'$ have constant sign on $p^{2}_{1}C$.
\begin{trivlist}
\item[a)] If
$a,b \in C$ are such that $\st a =\st b$, then there is a small path $\gamma \colon [0,1] \to C$ with
$\gamma (0)=a$ and $\gamma (1) = b$.
\item[b)] If $a,b \in \closure{C}$ are such that $\st a = \st b$, then there is a small path $\gamma \colon [0,1] \to \closure{C}$ with $\gamma (0)=a$, $\gamma (1)=b$, and such that $\gamma (0,1) \subseteq C$.
\end{trivlist}
\end{lemma}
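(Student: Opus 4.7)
My plan for (a) is to enclose $a$ and $b$ in a definable region contained in the monad $\stinv{\st a}$ and then build an explicit definable path between them inside that region. Pick any $\delta \in \ma^{>0}$, write $m_i := \min(a_i, b_i)$ and $M_i := \max(a_i, b_i)$ for $i = 1, 2$, and set
\[
K := \{(x,y) \in [m_1, M_1] \times [m_2 - \delta, M_2 + \delta] : f(x) < y < g(x)\}.
\]
Since $[m_i, M_i] \subseteq \stinv{\st a_i}$ and $\delta \in \ma$, every point of $K$ has standard part $\st a$, so $K \subseteq \stinv{\st a}$, while plainly $a, b \in K$. Hence any definable continuous path from $a$ to $b$ inside $K$ is automatically a small path.

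To show $K$ is definably path-connected, I would use the monotonicity of $f$ and $g$ on $[m_1, M_1]$ together with the inequalities $f(a_1) < a_2$, $f(b_1) < b_2$, $a_2 < g(a_1)$, $b_2 < g(b_1)$ to obtain $f(x) \leq \max(f(a_1), f(b_1)) < M_2 + \delta$ and $g(x) \geq \min(g(a_1), g(b_1)) > m_2 - \delta$ for every $x \in [m_1, M_1]$, so the fiber $K_x = [m_2 - \delta, M_2 + \delta] \cap (f(x), g(x))$ is a nonempty interval. I then define the continuous definable selector $s(x) := \tfrac{1}{2}(\max(f(x), m_2 - \delta) + \min(g(x), M_2 + \delta))$; by its strict-midpoint construction $(x, s(x)) \in K$ for every $x \in [m_1, M_1]$. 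The desired path then consists of three pieces: a vertical segment from $a$ to $(a_1, s(a_1))$ inside the convex fiber $K_{a_1}$, the selector curve $x \mapsto (x, s(x))$ from $(a_1, s(a_1))$ to $(b_1, s(b_1))$, and a vertical segment from $(b_1, s(b_1))$ to $b$ inside $K_{b_1}$.

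For (b) I will reduce to (a) using Lemma \ref{red} to push boundary points into $C$. If $a \in \partial C$, Lemma \ref{red} yields a small path $\alpha \colon [0, r] \to \closure{C}$ with $\alpha(0) = a$, $\alpha(r) \in C$, and $\alpha((0, r]) \subseteq C$; if $a \in C$, take $\alpha$ constant. Doing likewise at $b$ produces $\beta$. Then $\alpha(r), \beta(r') \in C$ share the standard part $\st a = \st b$, so part (a) supplies a small path $\gamma'$ in $C$ between them, and the concatenation $\alpha \cdot \gamma' \cdot \beta^{-1}$ is the required small path in $\closure{C}$ whose restriction to the open parameter interval lies in $C$.

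The main obstacle I anticipate is verifying that each piece of the path in (a) actually lands in the \emph{open} strip $f(x) < y < g(x)$, not just in its closure, and that the two vertical segments remain inside $K_{a_1}$ and $K_{b_1}$. The selector curve avoids the strip's boundary by construction (its midpoint is strictly between $\max(f(x), m_2 - \delta)$ and $\min(g(x), M_2 + \delta)$), and the two vertical segments stay inside the respective fibers because $K_{a_1}$ and $K_{b_1}$ are intervals, hence convex. Once these strict inequalities are confirmed, the containment $K \subseteq \stinv{\st a}$ makes the smallness of the composed path automatic.
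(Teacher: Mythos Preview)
Your argument is correct. For part (b) it is essentially identical to the paper's: push $a,b$ into $C$ via Lemma~\ref{red}, apply part (a), and concatenate.

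For part (a) the route is genuinely different. The paper parametrises the straight segment $[a,b]$, records the (finitely many) subintervals on which this segment leaves $C$ through $\Gamma f$ or $\Gamma g$, and over each such subinterval replaces the segment by a curve running an infinitesimal $\epsilon$ above $f$ (or below $g$); monotonicity of $f$ and $g$ is used to guarantee that these detours stay in the monad of $\st a$. Your approach instead fixes from the outset a definable ``box'' $K$ contained in the monad, checks that the fibres $K_x$ are nonempty intervals by the same monotonicity, and then connects $a$ to $b$ via two vertical segments and the midpoint selector $s$. This is cleaner: it avoids the case split on which boundary curve is crossed, the bookkeeping of the crossing times $t_1<\dots<t_k$, and the separate choices of infinitesimals $\delta$ and $\epsilon$ for each detour. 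The paper's construction, on the other hand, makes the near-linearity of the path more visible, which is occasionally convenient when one later wants explicit control on the derivative of $\gamma$; but for the purposes of this lemma your selector argument is entirely adequate and arguably more transparent. One small point worth stating explicitly is that $[m_1,M_1]\subseteq p^2_1 C$, so that $f,g$ are defined and $C^1$ there; this follows since $p^2_1 C$ is an interval containing $a_1$ and $b_1$.
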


\begin{proof}
First, let $a,b \in C$ with $\st a = \st b$.
If $a_1 = b_1$, then we define $\gamma (t)=(1-t)a+tb$, for $t\in [0,1]$.
So assume that $a_1 \not= b_1$, say $a_1 < b_1$ and define
$\gamma \colon [a_1 , b_1 ] \to I^2$ by $\gamma (t) = (t, \frac{b_2 - a_2 }{b_1 - a_1 }(t-a_1 ) +a_2 )$. Let
$$a_1 < t_1 < \dots < t_k < b_1$$ be such that
$$
\gamma
(t_{1} , t_{2}),  \gamma (t_3 , t_4 ), \dots , \gamma (t_{k-1}, t_k )$$ have empty intersection with $C$, and
$$\gamma (a_1, t_{1}), \gamma (t_2 , t_3 ) , \dots , \gamma (t_k , b_1 )$$ are subsets of $C$. Note that then $k$ is even, and let $i \in \{1, \dots ,\frac{k}{2} \}$.  Then either
 $\gamma (t_{2i-1} ),\gamma (t_{2i}) \in \closure{\Gamma f}$, or
 $\gamma (t_{2i-1}) , \gamma (t_{2i}) \in \closure{\Gamma g}$.  Assume $\gamma (t_{2i-1} ),\gamma (t_{2i}) \in \closure{\Gamma f}$ (the other case is similar).  Note that $\st a_2 = \st f(t_{2i-1 })=\st f(t)$ for every $t \in [t_{2i-1}, t_{2i}]$, since $f'$ has constant sign on $p^{2}_{1}C$.
 Find $\delta \in \ma^{>0}$ such that 
$$\delta < \min \{ t_{2i-1} - t_{2i-2}, t_{2i+1} - t_{2i} \},$$ and
$$\gamma_2 (t_{2i-1} - \delta )-f(t_{2i-1}-\delta ) \in \ma^{>0}, \;
\gamma_2 (t_{2i} + \delta ) - f(t_{2i} + \delta )\in \ma^{>0}.$$
Then $g(x) - f(x) > 0 $ on $[t_{2i-1} - \delta , t_{2i } + \delta ]$.  Take $\epsilon \in \ma^{>0}$ such that $$\epsilon < \min \{ g(x)-f(x):\;x\in  [t_{2i-1} - \delta , t_{2i} + \delta ]  \}.$$
Define $\alpha_i \colon [t_{2i - 1} - \delta , t_{2i} + \delta ] \to C$ by
\begin{eqnarray*}
\alpha_{i} (t) &=& (t , f (t) + \epsilon ) \mbox{ for } t\in (t_{2i-1}-\frac{\delta }{2}, t_{2i}+ \frac{\delta}{2}),\\
\alpha_{i} (t) &=& (t, (1-\frac{2}{\delta }(t-t_{2i-1} + \delta )) \gamma_2 (t_{2i -1} -\delta ) + \frac{2}{\delta }(t-t_{2i-1} + \delta ) (f(t_{2i-1} - \frac{\delta}{2} )+\epsilon ))\\ &&\mbox{when } t\in [t_{2i-1}-\delta , t_{2i-1} - \frac{\delta }{2}],\\
\alpha_{i} (t) &=& (t,(1-\frac{2}{\delta }(t-t_{2i} - \frac{\delta}{2} )) (f(t_{2i} + \frac{\delta}{2} )+\epsilon ) + \frac{2}{\delta }(t-t_{2i} - \frac{\delta}{2} )\gamma_2 (t_{2i} +\delta ))\\ &&\mbox{whenever } t\in [t_{2i}+ \frac{\delta }{2} , t_{2i} + \delta ].\\
\end{eqnarray*}
(So $\alpha_i |_{[t_{2i-1}-\delta , t_{2i-1} - \frac{\delta }{2}]}$ is a parametrization of the line segment $$[\gamma_2 (t_{2i -1} -\delta ), f(t_{2i-1} - \frac{\delta}{2} )+\epsilon],$$ and $\alpha_i |_{[t_{2i}+ \frac{\delta }{2} , t_{2i} + \delta ]}$ is a parametrization of $[f(t_{2i} + \frac{\delta}{2})+\epsilon , \gamma_2 (t_{2i} +\delta )]$.)
For every $i \in \{ 1, \dots , \frac{k}{2}  \}$ replace $\gamma |_{[t_{2i} - \delta , t_{2i} + \delta ]}$ by $\alpha_i$.  The resulting map is, after a linear change of variables, as required.

To prove the second part of the lemma, use Lemma \ref{red} to find $a', b' \in C$ and small paths $\gamma_1 \colon [0,r] \to \closure{C}$, $\gamma_2 \colon [0,r] \to \closure{C}$ such that $\gamma_1 (0)=a$, $\gamma_1 (r) = a'$, $\gamma_2 (0)=b$, $\gamma_2 (r)=b'$, and $\gamma_1 (0,r] \subseteq C$, $\gamma_2 (0,r] \subseteq C$.  Then use the first part of the lemma to find a small path $\gamma_3 \colon [0,1] \to C$ with $\gamma_3 (0)=a'$, $\gamma_3 (1)=b'$, and $\im{\gamma}\subseteq C$. After a linear changes of variables, $\gamma_1 \colon [0,\frac{1}{3}]\to \closure{C}$, $\gamma_3 \colon [\frac{1}{3}, \frac{2}{3}]\to C$, and $\gamma_2 \colon [\frac{2}{3},1]\to \closure{C}$.  Then define $\gamma \colon [0,1]\to \closure{C}$ by $\gamma (t)=\gamma_1 (t)$ if $t\in [0,\frac{1}{3}]$, $\gamma (t) = \gamma_3 (t)$ if $t\in (\frac{1}{3},\frac{2}{3})$, and $\gamma (t)=\gamma_2 (t)$ if $t\in [\frac{2}{3},1]$.  Clearly, $\gamma$ satisfies the requirements.
\end{proof}

\begin{lemma}\label{mean value for small paths}
Let $(f,g) \subseteq V^2$ be a cell of dimension two such that $f$,
$g$ are $C^1$ and $f'$, $g'$ have constant sign on $p^{2}_{1}(f,g)$.
Let $h\colon (f,g) \to R$ be definable and $C^1$ and  such that
$|\frac{\partial h}{\partial x_1
}|, |\frac{\partial h}{\partial x_2 }| < 1$. Then $\st h(a) = \st h(b)$ whenever $a,b \in (f,g)$ are such that $\st a=\st
b$.  If moreover $h$ extends continuously to $\cl h \colon \cl (f,g) \to R$, then $\st \cl h (a) = \st \cl h (b)$ whenever $a,b \in \cl (f,g)$ with $\st a = \st b$.
\end{lemma}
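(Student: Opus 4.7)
The plan is to connect $a$ and $b$ by a small path inside the cell and then bound the variation of $h$ piecewise. Given $a,b \in (f,g)$ with $\st a = \st b$, apply Lemma \ref{ex of small paths}(a) to obtain a small path $\gamma \colon [0,1] \to (f,g)$ with $\gamma(0) = a$ and $\gamma(1) = b$. From the construction, $\gamma$ is piecewise $C^1$ with finitely many pieces, each either (i) a linear segment whose endpoints have infinitely close coordinates, or (ii) an arc of the form $(t, f(t) + \epsilon)$ or $(t, g(t) - \epsilon)$ for some $\epsilon \in \ma^{>0}$, parametrized by $t$ on a subinterval of $[a_1, b_1]$ (widened by an infinitesimal $\delta$).

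On a type (i) piece, the mean value theorem applied to $h \circ \gamma$ together with $|\frac{\partial h}{\partial x_j}| < 1$ shows that $h$ changes by at most the sum of the (infinitesimal) coordinate differences. On a type (ii) piece along $(t, f(t) + \epsilon)$ for $t \in [t_1, t_2]$, one computes
\[
(h \circ \gamma)'(t) \;=\; \tfrac{\partial h}{\partial x_1}(\gamma(t)) + f'(t)\, \tfrac{\partial h}{\partial x_2}(\gamma(t)).
\]
Since $f'$ has constant sign on $p^{2}_{1}(f,g)$, the bounds $|\frac{\partial h}{\partial x_j}| < 1$ force the auxiliary functions $(h \circ \gamma)(t) \pm (t + f(t))$ to be monotonic in opposite directions (assuming $f' \geq 0$; the other case is symmetric). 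Comparing their values at $t_1$ and $t_2$ yields
\[
|(h \circ \gamma)(t_2) - (h \circ \gamma)(t_1)| \;\leq\; |t_2 - t_1| + |f(t_2) - f(t_1)|,
\]
and both summands are infinitesimal: the first because the horizontal extent of $\gamma$ lies inside $[a_1, b_1]$ up to infinitesimal adjustments, and the second because the small-path property forces $f(t_1)$ and $f(t_2)$ to both be infinitely close to $\st a_2$. Summing over the finitely many pieces gives $h(a) - h(b) \in \ma$, so $\st h(a) = \st h(b)$.

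For the closure version, use Lemma \ref{ex of small paths}(b) to obtain a small path $\gamma \colon [0,1] \to \cl(f,g)$ with $\gamma(0)=a$, $\gamma(1)=b$, and $\gamma((0,1)) \subseteq (f,g)$. By the first part, all $h(\gamma(s))$ for $s \in (0,1)$ share a common standard part $c \in \bk$, so $h(\gamma(s)) - \tilde c \in \ma$ for any fixed lift $\tilde c \in V$ of $c$. Continuity of $\cl h \circ \gamma$ at $0$ gives, for each $\eta > 0$ in $R$, some $s \in (0, 1)$ with $|\cl h(a) - h(\gamma(s))| < \eta$; combined with $h(\gamma(s)) - \tilde c \in \ma$ and letting $\eta$ shrink (an argument by contradiction if $|\cl h(a) - \tilde c|$ were non-infinitesimal), this forces $\cl h(a) - \tilde c \in \ma$, i.e., $\st \cl h(a) = c$. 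The same argument applied at $s = 1$ yields $\st \cl h(b) = c$.

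The step I expect to be the main obstacle is bounding $h$ on a type (ii) arc: the arclength of such an arc can be non-infinitesimal when $|f'|$ is large, so the naive Lipschitz estimate from $|\nabla h| \leq \sqrt{2}$ and arclength is too weak. The monotonicity trick above is the essential workaround, since it replaces arclength with the smaller quantity $|t_2 - t_1| + |f(t_2) - f(t_1)|$, which is infinitesimal by the small-path property together with the monotonicity of $f$.
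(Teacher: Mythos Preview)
Your argument is correct, but it takes a genuinely different route from the paper's for the first part. The paper does not inspect the internal structure of the small path coming from Lemma~\ref{ex of small paths}; instead it applies Lemma~10.8 of \cite{hlimit} to the image of $\gamma$, obtaining a finite partition and orthogonal transformations $s_i$ so that each piece $s_i X_i$ is the graph of a $C^1$ map $\phi_i$ with $|\phi_i'|<1$. The composite $H_i(x)=h(s_i^{-1}(x,\phi_i(x)))$ then has $|H_i'|$ bounded in $V$, while the domain of each $H_i$ has infinitesimal length because $s_i$ is an isometry and $\gamma$ is small; the mean value theorem finishes. So the paper sidesteps your ``type~(ii)'' difficulty by reparametrising rather than by your monotonicity trick with $(h\circ\gamma)(t)\pm(t+f(t))$.

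Your approach buys elementarity: it avoids invoking the Kurdyka--Raby-style reparametrisation lemma from \cite{hlimit}, and the monotonicity estimate $|(h\circ\gamma)(t_2)-(h\circ\gamma)(t_1)|\le |t_2-t_1|+|f(t_2)-f(t_1)|$ is a clean substitute for an arclength bound. The cost is that you rely on the \emph{proof} of Lemma~\ref{ex of small paths} rather than its statement, so your argument is less modular (and would break if that construction were changed). The paper's approach is cleaner in that it treats $\gamma$ as a black box, at the price of importing an external lemma. For the closure part the two arguments are essentially the same: both pass to nearby interior points via the path and use continuity of $\cl h$.
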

\begin{proof}
Let $a,b \in (f,g)$ be such that $\st (a)=\st
(b)$.  Then, by Lemma \ref{ex of small paths}, we can find a small path $\gamma \colon
[0,1] \to (f,g)$ with $\gamma (0)=a$ and $\gamma (1) = b$.

By Lemma 10.8 in \cite{hlimit}, we can partition $\im{\gamma }$ into definable
$X_1 ,\dots,X_{k+1}$, where $\dim{X_{k+1}} = 0$, and we can find
orthogonal linear transformations $s_1 , \dots , s_k$ of $R^2$, such
that for $i=1,\dots ,k$, $s_i X_i $ is the graph of a definable
$C^1$-function $\phi_i \colon p^{2}_{1}s_i X_i \to V$ on an open
$p^{2}_{1} s_i X \subseteq V$ with $|\phi_i '|< 1$ on $p^{2}_{1} s_i
X_i$.
We may assume that each $\Gamma \phi_{i}$ is a cell, and we let $\cl \phi_i$ be the continuous extension of $\phi_i$ to $\closure{p^{2}_{1} s_i X_i} \to R$.

Define $H_i \colon \cl{( p^{2}_{1}s_i
X_i )} \to R$ by $$H_i (x) = h(s_{i}^{-1} (x, \cl \phi_i  (x))).$$ Assume towards a contradiction that $\st
h(a) \not= \st h(b)$. For $i=1,\dots ,k$, let $x_i
, y_i \in R$ be such that $(x_i , y_i ) = p^{2}_{1} s_i X_i$.  Then for some $i$, $\st H_i (x_i ) \not= \st H_i (y_i )$.  However, $|x_i - y_i | \in \ma^{>0}$ (as $s_i$ is an isometry), and
$$|H'_{i}(t)| = |\nabla h
(s_{i}^{-1}(t , \phi_i (t))) \cdot D s_{i}^{-1} (t,\phi_i (t)) \cdot
D (id_R , \phi_i ) (t)| \in V^{>0},$$ for $t\in (x_i , y_i )$, a
contradiction with the mean value theorem.

Assume now that $h$ extends continuously to $\cl h \colon \cl (f,g) \to R$ and let $a,b \in \cl (f,g)$ be such that $\st a = \st b$.  By continuity of $\cl h$
and the proof of Lemma \ref{red} we can find $a',b' \in (f,g)$ infinitely close to $a$ and to $b$ respectively, and small paths $\alpha \colon [0,r] \to \cl (f,g)$, $\beta \colon [0,r] \to \cl (f,g)$, where $r>0$, so that $\alpha (0)=a$, $\alpha (r)=a'$, $\beta (0)=b$, $\beta (r) = b'$, $\alpha (0,r], \beta (0,r] \subseteq (f,g)$, and $$|\cl h(a) - h(\alpha (t))|,\; |\cl h(b)-h(\beta (t))| \in \ma^{>0}$$ for all $t\in (0,r]$.  Now use the first part of the lemma to find a small path $\gamma \colon [0,1] \to (f,g)$ with $\gamma (0)=a'$ and $\gamma (1)=b'$.  It is now obvious how to construct the desired map from $\alpha$, $\beta$, and $\gamma$.
\end{proof}

\begin{lemma}\label{extension lemma}
Let $C=(f,g) \subseteq V^2$ be a cell of dimension two such that $f$, $g$ are $C^1$ and $f'$, $g'$ have constant sign on $p^{2}_{1}C$.
Let further $h\colon C \to R$ be definable and $C^1$, with $|\frac{\partial h}{\partial x_1 }|$, $|\frac{\partial h}{\partial x_2 }| < 1$, and $\Gamma h \subseteq V^3$.  Then $h$ extends continuously to a definable function $\cl h \colon \closure{C} \to R$.
\end{lemma}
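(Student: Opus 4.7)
My plan is to define $\cl h(a)$ for each $a \in \partial C$ via the upper and lower envelopes of $h$ on small neighborhoods of $a$, then use Lemma \ref{mean value for small paths} to collapse these envelopes to a single point. First, $h$ is bounded on $C$: from $\Gamma h \subseteq V^3$ we get $h(C) \subseteq V$, and $V$ is order-bounded in $R$ (for any $r \in R^{>0} \setminus V$, convexity of $V$ forces $V \subseteq (-r, r)$). Hence for each $a \in \closure{C}$ and $\epsilon \in R^{>0}$, the expressions
$$
F_1(\epsilon) := \sup\{h(x) : x \in C,\ |x - a| < \epsilon\}, \qquad F_2(\epsilon) := \inf\{h(x) : x \in C,\ |x - a| < \epsilon\}
$$
define elements of $R$, monotone in $\epsilon$, and by o-minimality both have one-sided limits as $\epsilon \to 0^+$.

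The crucial step is to show that $\lim_{\epsilon \to 0^+}(F_1(\epsilon) - F_2(\epsilon)) = 0$. For $\epsilon \in \ma^{>0}$, every $x$ with $|x - a| < \epsilon$ satisfies $\st x = \st a$, so Lemma \ref{mean value for small paths} yields $h(x) - h(y) \in \ma$ for any two such $x, y \in C$. Thus the definable set $\{h(x) - h(x_0) : x \in C,\ |x - a| < \epsilon\}$, for any fixed $x_0$ in the relevant neighborhood, is contained in $\ma$; since any positive non-infinitesimal is an upper bound for this set in $R$, its supremum lies in $\ma^{\geq 0}$, and similarly for the infimum. Therefore $F_1(\epsilon) - F_2(\epsilon) \in \ma$ whenever $\epsilon \in \ma^{>0}$. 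As $F_1 - F_2$ is increasing in $\epsilon$, its limit at $0^+$ equals $\inf_{\epsilon > 0}(F_1 - F_2)(\epsilon)$, which is nonnegative and bounded above by every positive infinitesimal, and hence is $0$.

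It follows that $F_1$ and $F_2$ share a common limit $L \in R$; set $\cl h(a) := L$. The sandwich $F_2(\epsilon) \leq h(x) \leq F_1(\epsilon)$ immediately yields $\lim_{x \in C,\, x \to a} h(x) = \cl h(a)$, so the extension is continuous with respect to approaches from inside $C$; continuity at boundary-to-boundary approaches then follows by a triangle-inequality argument reducing to this case. The main technical subtlety is the oscillation estimate, which relies on the observation that a definable subset of $\ma$ has supremum and infimum in $\ma$ when taken in $R$; this is what allows the purely infinitesimal information provided by Lemma \ref{mean value for small paths} to be converted into a genuine $R$-valued limit statement.
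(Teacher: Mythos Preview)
Your oscillation approach is natural and close to working, but there is a real gap at the step where you conclude $\lim_{\epsilon\to 0^+}\bigl(F_1(\epsilon)-F_2(\epsilon)\bigr)=0$. From Lemma~\ref{mean value for small paths} you correctly obtain $F_1(\epsilon)-F_2(\epsilon)\in\ma$ for every $\epsilon\in\ma^{>0}$, and hence $L:=\inf_{\epsilon>0}\bigl(F_1(\epsilon)-F_2(\epsilon)\bigr)\in\ma^{\ge 0}$. But the assertion that $L$ is ``bounded above by every positive infinitesimal'' does not follow: you only know $L\le F_1(\epsilon)-F_2(\epsilon)$ for each infinitesimal $\epsilon$, and the right-hand side is \emph{some} element of $\ma$, not an arbitrarily small one. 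Nothing in your argument rules out $L$ being a fixed positive infinitesimal, in which case the upper and lower envelopes do not coincide and your $\cl h(a)$ is ill-defined.

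This is exactly the obstacle the paper's proof has to overcome, and it does so by a device your argument is missing. The hypotheses on $C$ and $h$ (the bounds $|\partial h/\partial x_i|<1$, the sign conditions on $f',g'$) do not refer to $V$; the only role of $V$ is the containment $C\subseteq V^2$, $\Gamma h\subseteq V^3$. Hence Lemma~\ref{mean value for small paths} applies equally well with $V$ replaced by any proper convex subring $V'\supseteq V$. If $L=\delta>0$, take $\ma'=\{x\in R:|x|<\delta q\text{ for all }q\in\mathbb{Q}^{>0}\}$ with associated valuation ring $V'\supseteq V$; re-running your oscillation estimate for $(R,V')$ gives $L\in\ma'$, contradicting $\delta\notin\ma'$. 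With this one added step your argument goes through and yields a pleasant alternative to the paper's route (which shows directly that $\closure{\Gamma h}$ is the graph of a function, using the same change-of-valuation trick); without it the proof does not close.
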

\begin{proof}
We first show that $\closure{ \Gamma h}$ is the graph of a function.  By Lemma \ref{mean value for small paths}, $\st \Gamma h$ is the graph of a function, and $\st \Gamma h = \st \closure{\Gamma h}$.  So if there are $x,y \in \closure{ \Gamma h}$ with $p^{3}_{2}x = p^{3}_{2} y$ and $x_3 \not= y_3$, then $|x_3 - y_3 | \in \ma^{>0}$.  Set $\delta := |x_3 - y_3 |$, let  $\ma'$ be the maximal ideal $$\{x \in R:\; |x|<\delta q \mbox{ for all }q\in \mathbb{Q}^{>0}  \},$$ and let $V'$ be the corresponding valuation ring with residue map $\st' \colon V' \to \boldsymbol{k}'$. Then $V\subseteq V'$. Since Lemma \ref{mean value for small paths} holds for any choice of $V$, $\st' \Gamma h = \st' \closure{\Gamma h}$ is again the graph of a function, a contradiction with $\delta \not\in \ma'$.

By Lemma 1.7, p.95 in \cite{book}, $\closure{C} = p^{3}_{2} \closure{ \Gamma h}$.  We let $\cl h$ be the function $\closure{C} \to R$ whose graph is the set $\closure{ \Gamma h}$.  To see that $\cl h$ is continuous, assume towards a contradiction that $a \in \closure{C}$ and $\epsilon >0$ are such that for every $\delta >0$, we can find $x_{\delta} \in \cl C$ with $\text{d}(x_{\delta },a) < \delta $ and $|\cl h(a)- \cl h(x_{\delta })| > \epsilon $.  Then curve selection yields a definable and continuous $\gamma \colon (0,r] \to \closure{C}$, $r>0$, with $\lim_{t\to 0}\gamma (t) = a$, $\text{d}(\gamma (t), a)= t$, and $|\cl h(\gamma (t)) - \cl h (a)|>\epsilon $.  Then, since $\closure{\Gamma h}$ is bounded, $\lim_{t \to 0} \cl h (\gamma (t) )$ exists in $R$, and since $\closure{\Gamma h}$ is closed, $\lim_{t\to 0} \cl h( \gamma (t)) \in \Gamma \cl h$, a contradicition with $\cl h$ being a function.
\end{proof}

\begin{theorem}
$\Sigma (1) \Rightarrow \Sigma$.
\end{theorem}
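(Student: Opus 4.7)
The plan is to invoke Lemma \ref{red to dim2} to reduce $\Sigma(n)$ to showing that every definable $X \subseteq I^{1+n}$ with $\dim X(r) \leq 1$ for all $r \in I$ admits a good $\epsilon_0$, and then to pass through a coordinate projection into $V^2$ in order to apply Lemma \ref{main lemma}.

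Fix such an $X$. By cell decomposition together with Lemmas \ref{good for dim 1} and \ref{partition}, I may assume $X$ is a single cell of dimension two. There are then two coordinates $x_{i_1}, x_{i_2}$ of $R^{1+n}$ whose projection $\pi$ restricts to a definable homeomorphism from $X$ onto a two-dimensional cell $D \subseteq V^2$, and the remaining $n - 1$ coordinates of points of $X$ are given by $C^1$ functions $h_1, \dots, h_{n-1} \colon D \to V$. Let $G \colon D \to V$ be the pullback of the first coordinate of $R^{1+n}$: if $0 \in \{i_1, i_2\}$ then $G$ is simply one of the two projection coordinates, and otherwise $G$ coincides with one of the $h_j$. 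After restricting to $G^{-1}(R^{\geq 0})$ and refining via cell decomposition, I may assume $D$ is the closure of an open cell $(f, g)$ with $f, g$ that are $C^1$ and $f', g'$ of constant sign, and that $G \colon D \to R^{\geq 0}$ is continuous.

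Lemma \ref{main lemma} then yields $\epsilon_0 \in \ma^{>0}$ good for $G$, so that $\st G^{-1}(\epsilon_0) = \st G^{-1}(\epsilon)$ in $\boldsymbol{k}^2$ for every $\epsilon \in \ma^{>\epsilon_0}$. To transfer this to $X$, given $c \in G^{-1}(\epsilon) \cap V^2$ I would pick $c' \in G^{-1}(\epsilon_0) \cap V^2$ with $\st c = \st c'$; Lemma \ref{mean value for small paths}, together with Lemma \ref{extension lemma} for the boundary cases, then gives $\st h_j(c) = \st h_j(c')$ for each relevant $j$. The corresponding points $\pi^{-1}(c) \in X(\epsilon)$ and $\pi^{-1}(c') \in X(\epsilon_0)$ therefore agree coordinate-by-coordinate modulo $\ma$ (the projection coordinates from $\st c = \st c'$, the others from the $\st$-equality of the $h_j$'s), so $\st X(\epsilon) \subseteq \st X(\epsilon_0)$. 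The reverse inclusion is symmetric, making $\epsilon_0$ good for $X$.

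The main obstacle is that Lemmas \ref{mean value for small paths} and \ref{extension lemma} require the bounds $|\partial h_j / \partial x_1|, |\partial h_j / \partial x_2| < 1$, which need not hold for an arbitrary $C^1$ function on a cell (consider $h(x, y) = \sqrt{|x|}$ near the origin). To address this I would further refine the decomposition of $D$ via a Lipschitz-type partition in the spirit of the two-variable analogue of Lemma 10.8 of \cite{hlimit}: partition $D$ into finitely many subcells on each of which, after an orthogonal linear change of coordinates, every relevant $h_j$ sits over a cell of the required form $(f,g)$ with both partials bounded by $1$. Lemma \ref{partition} then recombines the good $\epsilon_0$'s produced on the pieces into a single good $\epsilon_0$ for $X$, completing the reduction.
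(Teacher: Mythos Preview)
Your overall plan---reduce via Lemma \ref{red to dim2} to two-dimensional $X$, push down to a planar base, apply Lemma \ref{main lemma} there, and lift back using Lemma \ref{mean value for small paths}---is exactly the paper's strategy, and you have correctly isolated the one real obstruction: the functions $h_j$ over the planar base need partial derivatives bounded by $1$ before Lemmas \ref{mean value for small paths} and \ref{extension lemma} apply.

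The gap is in your proposed repair. Partitioning $D$ and applying an orthogonal change of the \emph{two} base coordinates cannot force $|\partial h_j/\partial x_l|<1$: such a rotation preserves $|\nabla h_j|$, so if some $h_j$ is steep it stays steep. Applying Lemma 10.8 of \cite{hlimit} to each graph $\Gamma h_j\subseteq R^3$ separately does not help either, since the resulting orthogonal transformations of $R^3$ mix domain and codomain and will in general differ from one $j$ to another; you would lose the single planar base over which all the $h_j$ live simultaneously. In short, there is no ``two-variable analogue'' of Lemma 10.8 that tames several $h_j:D\to R$ at once by acting only on $D$.

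The paper avoids this by reversing the order of operations: it applies Lemma 10.8 to $X$ in the full ambient space $R^{n+1}$ \emph{before} any projection. This yields orthogonal transformations $s_i$ of $R^{n+1}$ such that each $s_iX_i$ is already the graph of a map $\phi_i$ over an open set in $R^2$ with all partials below $1$. Only then does one form $G(x)=F(s^{-1}(x,\cl\phi(x)))$, where $F$ is the first-coordinate projection; because $s$ is an isometry of $R^{n+1}$, infinitesimal closeness in the planar base transfers back to infinitesimal closeness in $X$. Once you make this single adjustment---rotate in $R^{n+1}$ first, then project---your argument goes through and coincides with the paper's.
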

\begin{proof}
Assume that $(R,V)\models \Sigma (1)$, and let $X\subseteq I^{1+n}$ be definable.  By Lemmas \ref{red to dim2} and \ref{good for dim 1}, towards proving $\Sigma (n)$, we may assume that $X$ is of dimension two, and $p^{n+1}_{1}X = (0,1)$.

By Lemma 10.8 from \cite{hlimit}, we can find
 a partition of $X$ into definable sets $X_1 , \dots ,X_{k+1}$, where
 $\dim X_{k+1} <2$, and orthogonal linear transformations
 $s_1 , \dots ,s_k$ of $R^{n+1}$, such that each $s_i X_i$ is the
 graph
 of a definable $C^1$-map $\phi_i \colon p^{n+1}_{2} s_i X_i \to R^{n-1}$ on an open
 $p^{n+1}_{2} s_i X_i \subseteq R^2$ and
 $|\frac{\partial \phi_{ij} }{\partial x_l}| < 1$ on $p^{n+1}_{2}s_i X_i$ for $l = 1,2$ and $j=1,\dots ,n-1$.

Let $i\in \{1,\dots ,k  \}$, and set, for the sake of simplicity, $X:=X_i$, $s:=s_i$, and $\phi := \phi_i$.
By cell decomposition, we may assume that $s X$ is a cell such that $p^{n+1}_{2} sX = (g,h)$, where $g,h \colon p^{2}_{1} (g,h) \to R$ are definable and $C^1$, and $g'$, $h'$ have constant sign on $p^{2}_{1}(g,h)$.

By Lemma \ref{extension lemma}
each coordinate function $\phi_{j}$ of $\phi$ extends continuously to $$\cl \phi_{j} \colon \closure{p^{n+1}_{2} s X} \to R.$$ We set $$\cl \phi  := (\cl \phi_{1}, \dots ,\cl \phi_{n-1} ),$$ and one easily checks, using $\closure{ \Gamma \phi_j }=\Gamma (\cl \phi_j )$ and the continuity of $\cl \phi_j$ for all $j$, that $$\closure{X} = s^{-1} (\closure{sX} ) = s^{-1}( \Gamma \cl \phi ).$$
Let $F\colon \closure{X} \to [0,1]$ be the coordinate projection $(x_1 , \dots ,x_{n+1}) \mapsto (x_1 )$, and
define $G \colon \closure{ p^{n+1}_{2} s X } \to R^{\geq 0}$ by
$$G (x):=F( s^{-1}(x,
\cl \phi (x) )).$$  Since $G$ is continuous, Lemma \ref{main lemma} yields $\epsilon_0$ good for $G$. We claim that $\epsilon_0$ is also good
for $F$.

To see that $\st F^{-1}(\epsilon )\subseteq \st F^{-1}(\epsilon_0 )$, let $x \in F^{-1}(\epsilon )$, for some $\epsilon \in \ma^{>\epsilon_0 }$.  Our aim is to show that $\text{d}(x, F^{-1}(\epsilon_0 )) \in \ma^{>0}$.
We have $p^{n+1}_{2} s(x) \in G^{-1} (\epsilon )$, hence $\text{d}(p^{n+1}_{2}s (x) , G^{-1}(\epsilon_0 )) = \delta$  for some $\delta \in \ma^{>0}$.  Take $y \in G^{-1} (\epsilon_0 )$ such that $\text{d}(p^{n+1}_{2}s (x),y) = \delta$.
Then, by the assumption on $g'$ and $h'$, and since $|\frac{\partial \phi_j }{\partial x_1 }|,|\frac{\partial \phi_j }{\partial x_2 }|< 1$ for $j=1,\dots ,n-1$, Lemma \ref{mean value for small paths} applied to each coordinate function of $\phi$ yields $\text{d}(s x,(y,\cl \phi (y))) \in \ma^{> 0}$. Now $F(s^{-1}(y, \cl \phi (y)) = \epsilon_0$, and, since $s$ is an isometry, $$\text{d}(x, s^{-1} (y,\cl \phi (y) ) ) \in \ma^{> 0}.$$  Similarly, one shows that $\st F^{-1}(\epsilon_0 )\subseteq \st F^{-1}(\epsilon )$.

By Lemma \ref{good for dim 1}, we can find $\epsilon_{k+1}$ good for $X_{k+1} \to (0,1)\colon x\mapsto x_1$, and by the above, we can find $\epsilon_i$ good for $X_i \to (0,1) \colon x \mapsto x_1$ for each $i=1,\dots ,k$.
Then $\epsilon_0 := \max_{i=1,\dots, k+1}\{ \epsilon_i \}$ is such that $\st X (\epsilon_0 ) = \st X (\epsilon )$  for all $\epsilon \in \ma^{>\epsilon_0 }$.

\smallskip\noindent

\end{proof}
Theorem \ref{maintheorem} from the introduction now follows.

\begin{ack}
An earlier version of the proof of Lemma \ref{main lemma} uses a consequence of Shiota's Hauptvermutung for o-minimal fields (see \cite{shiota}), instead of the Invariance of Domain Theorem.  The present version is shorter, but nevertheless inspired by Shiota's striking result.

 I thank Lou van den Dries for his interest in and comments on this paper.
\end{ack}

\end{section}

\end{document}